\newcommand{\psl}{\psi_{\lambda}}
\newcommand{\supp}{\text{supp }}
\newcommand{\Lpd}{L^2(\mathbb{R}^d)}
\newcommand{\bn}{\mathbf{n}}
\newcommand{\bs}{\mathbf{s}}
\newcommand{\mm}{\mathcal{M}}
\theoremstyle{plain}
\newtheorem{theorem}{Theorem}
\newtheorem{prop}{Proposition}
\newtheorem{definition}{Definition}
\theoremstyle{definition}
\newtheorem{remark}{Remark}
\newtheorem{problem}{Problem}
\title[Reconstruction from unsigned frame coefficients]{Reconstructing real-valued
functions from unsigned coefficients with respect to wavelet and other
frames}
\author{Rima Alaifari}
\address[Rima Alaifari]{ Department of Mathematics, ETH Z\"{u}rich, R\"{a}mistrasse 101, 8092 Z\"{u}rich, Switzerland}
\author{Ingrid Daubechies}
\address[Ingrid Daubechies]{Department of Mathematics, Duke University, Durham/NC 27708-0320, USA}
\author{Philipp Grohs}
\address[Philipp Grohs]{ Department of Mathematics, ETH Z\"{u}rich, R\"{a}mistrasse 101, 8092 Z\"{u}rich, Switzerland and
Faculty of Mathematics, University of Vienna, Oskar-Morgenstern-Platz 1, 1090 Wien, Austria}
\author{Gaurav Thakur}
\address[Gaurav Thakur]{INTECH Investment Management, One Palmer Square 441, Princeton, NJ 08542}
\keywords{Phase retrieval, semi-discrete frames, injectivity, stability, sampling}
\subjclass[2010]{42C15, 49N45, 94A12, 94A20}
\begin{document}

\maketitle

\begin{abstract}
In this paper we consider the following problem of phase retrieval: Given a collection of real-valued band-limited functions $\{\psi_{\lambda}\}_{\lambda\in \Lambda}\subset L^2(\mathbb{R}^d)$ that constitutes a semi-discrete frame, we ask whether any real-valued function $f \in L^2(\mathbb{R}^d)$ can be uniquely recovered from its unsigned convolutions ${\{|f \ast \psi_\lambda|\}_{\lambda \in \Lambda}}$. 

We find that under some mild assumptions on the semi-discrete frame and if $f$ has exponential decay
at $\infty$, it suffices to know $|f \ast \psi_\lambda|$ on suitably fine lattices to uniquely 
determine $f$ (up to a global sign factor). 

We further establish a local stability property of our reconstruction problem. Finally, for two concrete examples of a (discrete) frame of $L^2(\mathbb{R}^d)$, $d=1,2$, we show that through sufficient oversampling one obtains a frame such that any real-valued function with exponential decay can be uniquely recovered 
from its unsigned frame coefficients.
%
\end{abstract}

\section{Introduction}

In phase retrieval, one wishes to recover a function from phaseless (or unsigned) measurements. A classical example is the problem of reconstructing a function $f$ from its Fourier modulus $|\hat{f}|$ \cite{hurt2001phase, klibanov1995phase}, a question which appears in e.g. X-ray crystallography. It is well-known that this problem is not uniquely solvable \cite{akutowicz1956, walther1963}.

In the discrete setting, phase retrieval is typically formulated as recovering a finite signal $x \in \mathbb{C}^N$ from $\{ |\langle x,y_i \rangle |^2\}_{i=1}^m$, where $\{y_i\}_{i=1}^m\subset \mathbb{C}^N$ is a collection of measurement vectors (such as elements of a frame for audio processing applications or complex exponentials in the Fourier modulus case) \cite{balan2006signal, Bandeira_SavingPhase, candes2015phase}. 

An interesting setting concerning phase retrieval that combines the continuous framework and frame aspects, studied in \cite{mw}, seeks to recover a function ${f\in L^2(\mathbb{R})}$ from the magnitudes  of its semicontinuous wavelet coefficients, i.e. from $\{|f\ast \psi_j |\}_{j\in \mathbb{Z}}\subset L^2(\mathbb{R})$, where $\psi_j(x):=2^{j}\psi(2^jx)$. 
There it is shown that if $\psi$ is a Cauchy wavelet, that is, $\hat \psi (\xi)=\xi^p \exp(-\xi)\chi_{\xi>0}$, then any $f\in L^2(\mathbb{R})$ can be uniquely determined from $\{|f\ast \psi_j |\}_{j\in \mathbb{Z}}$, up to a global phase.

In this paper we consider a setting complementary to the one in \cite{mw}. Given a collection  $\{\psi_{\lambda}\}_{\lambda\in \Lambda}\subset L^2(\mathbb{R}^d)$ of \emph{real-valued and band-limited} functions  such that the system $\Psi_\Lambda:=\{\psi_\lambda(\cdot - u)\}_{u\in \mathbb{R}^d,\lambda\in \Lambda}$
constitutes a semi-discrete frame as defined in Section \ref{sec:probform} below, 
we study the recoverability of real-valued, multivariate $f\in L^2(\mathbb{R}^d)$ from 
unsigned convolutions $\{|f\ast \psi_\lambda|\}_{\lambda\in \Lambda}$.

Our main result states that, under very general conditions on the semi-discrete frame $\Psi_\Lambda$ (precisely stated in Section \ref{sec:main}), any real-valued function ${f\in L^2(\mathbb{R}^d)}$ that decays exponentially at $\infty$ can be uniquely
recovered, up to a global sign, from unsigned samples of the convolutions $|f\ast \psi_\lambda|$ on sufficiently fine lattices. We formulate our results for exponentially decaying functions for simplicity, but this condition can be further relaxed, as can be seen from the proof of Theorem \ref{mainprop}. The main requirement can be understood as a condition on the zero set of the Fourier transform $\widehat{f}$. In finite-dimensions, controlling the zero set of the measurements has been used to study uniqueness of some classes of phase retrieval problems \cite{alexeev2014phase,bodmann2016algorithms}. A recent paper studies phase retrieval in the real-valued setting for signals in shift-invariant spaces \cite{chen2016phase}.

The arguments exploited in our approach are similar to those in previous works \cite{alaifari-grohs} and \cite{thakur} by the authors, on the recovery of univariate band-limited and 
real-valued functions $f\in L^2(\mathbb{R})$ from unsigned samples. For the method of \cite{thakur} it was essential that $f$ be real-valued; our approach inherits the same restriction. Consequently, in contrast to the results of \cite{mw}, our result works only for real-valued and band-limited semi-discrete frames. Aside from that, however, the frame $\Psi_\Lambda$ can be rather arbitrary. In particular, our results cover general multivariate wavelet frames, curvelet frames, ridgelet frames and many others. 

Another difference with \cite{mw} is that we only require the unsigned convolutions to be known on a sufficiently dense sampling lattice. For instance, if $\psi$ is a real-valued and band-limited wavelet we show that, for some oversampling constant $\alpha>0$ (which is determined explicitly by the bandwidth of $\psi$) the data $\{| \langle f , 2^{j/2}\psi(2^j\cdot - \alpha k)\rangle|\}_{j,k\in\mathbb{Z}}$ uniquely determine any  $f\in L^2(\mathbb{R})$ that has exponential decay at $\infty$; similar sampling results hold in the multivariate setting and for more general frames such as curvelets, shearlets or ridgelets. 

The two different settings, namely the one considered here and the one of \cite{mw}, might give insight into the general problem of recovering a function
from phaseless information.

This paper is organized as follows. In Section \ref{sec:probform} we first recall the definition of a semi-discrete frame and then state the precise formulation of the problem. In Section \ref{sec:approach} we derive a multidimensional generalization of the result in \cite{thakur}. Next, this is used to show in Section \ref{sec:main} our main result (Theorem \ref{mainprop}) on recovering a function $f$ from unsigned samples of $|f\ast \psi_\lambda|$. The stability properties of our reconstruction problem are the subject of Section \ref{sec:stability}, where we derive a local stability result (Theorem \ref{thm-stab}). In Section \ref{sec:examples}  we consider two concrete examples of discrete frames: an orthonormal basis of Meyer wavelets for $L^2(\mathbb{R})$ and a tight frame of curvelets for $L^2(\mathbb{R}^2)$. We show in Theorem \ref{phr-meyer}  that  suitably oversampling the Meyer wavelet basis yields a frame such that any exponentially-decaying, real-valued function in $L^2(\mathbb{R})$ can be uniquely recovered from its unsigned frame coefficients. Similarly, Theorem \ref{phr-curvelets} gives a result in $2D$: Sufficient oversampling of the curvelet frame results in a frame for which any exponentially-decaying, real-valued function can be uniquely recovered from its unsigned frame coefficients.

We fix the following notations: For a set $C \subseteq \mathbb{R}^d$, we use the notation $L^2(C,\mathbb{R}):=\{ f \in L^2(C); \ f:C \to \mathbb{R}\}$. Similarly, for a discrete set $\Lambda \subset \mathbb{R}^d$, we denote the $\ell^2-$space with codomain $Y$ by $\ell^2(\Lambda,Y)$. 
The range of an operator $T$ will be written as $\mathcal{R}(T)$. Convolution is denoted by $\ast$, complex conjugation of $f$ by $\overline{f}$ and the Fourier transform is normalized as $\hat{f}(\xi) = \int_{\mathbb{R}^d} f(x) e^{-2 \pi i x \cdot \xi} dx;$ with this notation, 
$\widehat{(f\ast g)}(\xi)=\hat{f}(\xi)\hat{g}(\xi)$.
To describe a true subset we use the symbol $\subset$ 
(i.e. 
$E \subset F$ implies $E \neq F$; if $E=F$ is allowed, we write 
$E\subseteq F $) and for $k \in \mathbb{R}$, $k\mathbb{Z} = \{ k \cdot n; \ n \in \mathbb{Z} \}$. If $M$ is a $d \times d-$dimensional real matrix, then $M^T$ denotes its transpose.

\section{Problem formulation}\label{sec:probform}
Let $\Lambda$ be a discrete set of indices and $\{ \psi_\lambda \}_{\lambda \in \Lambda}$ a collection of real-valued functions in $L^2(\mathbb{R}^d)$ for some dimension $d \in \mathbb{N}$. We assume that each $\psi_\lambda$ is band-limited, i.e. that each $F_\lambda := \supp (\hat{\psi}_\lambda) \subset \mathbb{R}^d$ is compact. We further define ${\Psi_\Lambda:=\{\psi_\lambda(\cdot - u)\}_{u\in \mathbb{R}^d,\lambda\in \Lambda}}$ and require that $\Psi_\Lambda$ constitutes a so-called \textit{semi-discrete} frame. More precisely,
\begin{equation}\label{frame}
A \| f \|_2^2 \leq \sum_{\lambda \in \Lambda} \| f \ast \psl \|_2^2 \leq B  \| f \|_2^2
\end{equation}
for all $f \in L^2(\mathbb{R}^d)$, or equivalently,
\begin{equation}\label{pu}
A \leq \sum_{\lambda \in \Lambda} |\widehat{\psi}_\lambda(\xi)|^2 \leq B
\end{equation}
for some frame bounds $B \geq A > 0$. Note that this is not a frame in the classical sense: in a semi-discrete frame the translation parameter $u$ is left unsampled (i.e. one considers $u \in \mathbb{R}^d$ rather than restricting it to a discrete subset of $\mathbb{R}^d$). Let $\widetilde{\psi}_\lambda$ denote the \textit{dual} frame elements 
$$\widehat{\widetilde{\psi}}_\lambda(\xi) = \frac{\widehat{\psi}_\lambda(\xi)}{\sum_{\lambda \in \Lambda} |\widehat{\psi}_\lambda(\xi)|^2}.$$
Then, any function $f \in L^2(\mathbb{R}^d)$ can be reconstructed from $\{ f \ast \psl \}_{\lambda \in \Lambda}$ as
\begin{equation*}
f(x) = \sum_{\lambda \in \Lambda} \big( (f \ast \psl) \ast \overline{\widetilde{\psi}_\lambda} \big) (x).
\end{equation*}
With these assumptions the problem of phase retrieval considered here can be formulated as follows:

\begin{problem}\label{prob}
Let $f \in \Lpd$ be a real-valued function and assume that $f$ has exponential decay at $\infty$. We define $f_{\lambda} = f \ast \psi_\lambda$ and consider $d-$dimensional lattices $X_{\lambda} = \{x_{\lambda,\bn}\}_{\bn \in \mathbb{Z}^d}$ defined for each $\lambda \in \Lambda$. Given the set of discrete sample magnitudes $\{ |f_\lambda|(X_\lambda)\}_{\lambda \in \Lambda} = \{ |f_\lambda(x_{\lambda,\bn})|\}_{\lambda \in \Lambda, \bn \in \mathbb{Z}^d}$,  we ask whether it is possible to recover $f$ uniquely (up to a global sign). 
\end{problem} 
Clearly, the answer will depend on the sampling scheme for each scale $\lambda$. In addition, some mild additional assumptions on the family $\{ \psi_\lambda \}_{\lambda \in \Lambda}$ will be needed.

We note that because $f$ has exponential decay, i.e because $\int_{\mathbb{R}^d}|f(x)| e^{\alpha |x|} dx < \infty$ for some $\alpha >0$, its Fourier transform extends to an analytic function on 
a ``strip'' $\{z \in \mathbb{C}^d; |\mbox{Im } z_j| < \alpha/d , j=1,\ldots,d\}$ and is real analytic on $\mathbb{R}^d$. 

This implies that knowing $ f \ast \psi_\lambda$ for even one $\lambda$ already determines $f$ (within the class of functions with the same exponential decay); in principle knowing sufficiently fine-grained samples of one $| f \ast \psl|$ would therefore similarly determine $f$ uniquely. However, this approach would be extremely unstable and totally unfeasible in practice: even within the class of compactly supported functions, one can, for all $\epsilon > 0$, find $\tilde{f}_\epsilon$ such that both $\| (f-\tilde{f}_\epsilon) \ast \psl \|_{L^2(\mathbb{R}^d)} \leq \epsilon$ and $\| f-\tilde{f}_\epsilon \|_{L^\infty(C)} \leq \epsilon$, where $C = \supp f$, even though $\| f-\tilde{f}_\epsilon \|_{L^2(\mathbb{R}^d)} \geq 1$. With our approach, requiring a covering condition for the $F_\lambda = \supp \widehat{\psi}_\lambda$ (see below) and using samples $|f \ast \psl|$ for all $\lambda \in \Lambda$, this type of instability is avoided.

\section{The approach}\label{sec:approach}

In earlier work by one of the authors \cite{thakur}, it was shown that if a real-valued band-limited function $g \in L^2(\mathbb{R})$ can be reconstructed from the absolute values of samples $|g(y_k)|$ of $g$ if the sampling frequency is at least twice the Nyquist frequency of $g$. For $d=1$, an immediate consequence is that for Problem \ref{prob}, each $f_\lambda$ can be reconstructed up to a global sign, if sufficiently finely sampled values of $|f_{\lambda}|$ are given. This follows from the assumptions that $f$ and $\psl$ are real-valued and that $\psl$ is band-limited for each $\lambda \in \Lambda$. It then remains to piece together $f$ from the $\sigma_\lambda (f \ast \psi_\lambda)$, where the $\sigma_\lambda \in \{-1,1\}$ are arbitrary and unknown. \\

For $d>1$, we instead use a non-constructive approach similar to \cite[Theorem 3.5]{alaifari-grohs} to prove a multivariate version of \cite[Theorem 1]{thakur}.
\begin{prop}\label{thakur1-multid-simple}
Let $g \in L^2(\mathbb{R}^d)$ be real-valued and band-limited, with
$\supp \hat{g} \subseteq \left[-\frac{1}{2},\frac{1}{2} \right]^d$ 
and set $X= D[2 \bs]^{-1} \mathbb{Z}^d$, where 
$D[\bs] = \text{diag}(s_1,\dots,s_d)$, with $s_1, \ldots, s_d \geq 1$. 
Then, up to a global sign, $g$ can be uniquely recovered from the samples $|g|(X)$.
\end{prop}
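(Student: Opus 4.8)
The plan is to reduce the $d$-dimensional statement to the one-dimensional result of \cite[Theorem 1]{thakur} by slicing. Since $\supp \hat g \subseteq [-\tfrac12,\tfrac12]^d$, the function $g$ is band-limited in each coordinate direction separately with Nyquist rate $\tfrac12$, and the lattice $X = D[2\bs]^{-1}\mathbb{Z}^d$ samples the $j$-th coordinate at spacing $\tfrac{1}{2 s_j} \le \tfrac12$, i.e. at (at least) twice the critical rate, so the one-dimensional theorem applies along every axis-parallel line through a lattice point. First I would fix an arbitrary line $\ell$ parallel to the $e_j$-axis and passing through points of $X$; the restriction $g|_\ell$ is a one-dimensional $L^2$ band-limited real-valued function (for a.e. such line), and its samples on $\ell \cap X$ are spaced at $\tfrac{1}{2 s_j}$, so by \cite[Theorem 1]{thakur} the values of $|g|$ on $\ell\cap X$ determine $g|_\ell$ up to a single global sign $\sigma_\ell \in \{-1,1\}$. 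Doing this for all such lines in all $d$ coordinate directions recovers $g$ on all axis-parallel lines through $X$, each up to its own sign.

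The crux is then to show that these per-line signs $\sigma_\ell$ can be chosen consistently, so that they glue to a single global sign for $g$ on $\mathbb{R}^d$. Here is where I would mimic the non-constructive argument of \cite[Theorem 3.5]{alaifari-grohs}: consider the set $Z = \{x : g(x) = 0\}$; away from $Z$ the sign of $g$ is locally constant (by continuity of the band-limited, hence real-analytic, function $g$), so on each connected component of $\mathbb{R}^d \setminus Z$ the sign is well-defined, and the recovered magnitude $|g|$ together with the known sign pattern on the dense family of lines pins down $g$ up to one global sign on each such component. The key point is that $g$, being real-analytic and not identically zero (the trivial case is handled separately), has a zero set $Z$ of Lebesgue measure zero that cannot disconnect $\mathbb{R}^d$ in a way that would allow an independent sign flip: any two components are joined by a path, and in fact by a polygonal path made of axis-parallel segments lying in lines through $X$ on which $|g|$ is non-vanishing at enough sample points, forcing the signs to agree. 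Making this gluing precise — i.e. ruling out the possibility of two ``sign-independent'' pieces — is the main obstacle, and I expect it to hinge on the real-analyticity of $g$ (so $Z$ has empty interior and its complement, while possibly disconnected, still admits enough connecting lines densely populated by $X$) together with a density/continuity argument that propagates a sign choice from one line to an adjacent parallel line via their common behavior on the transverse lines.

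Once consistency of signs is established, one sets the global sign by normalizing on any fixed line where $g \not\equiv 0$, and concludes that $g$ is uniquely determined by $|g|(X)$ up to the single factor $\pm 1$. I would present the measure-zero/real-analyticity facts as a short lemma (or cite the analogous statement from \cite{alaifari-grohs}), then devote the bulk of the argument to the gluing step; the one-dimensional input and the sampling-rate bookkeeping ($s_j \ge 1 \Rightarrow \tfrac{1}{2 s_j} \le \tfrac12$) are routine and can be stated briefly.
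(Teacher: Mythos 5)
Your reduction to the one-dimensional theorem of \cite{thakur} along axis-parallel lattice lines is fine as far as it goes (the restrictions are indeed real-valued band-limited $L^2(\mathbb{R})$ functions sampled at spacing $\tfrac{1}{2s_j}\le\tfrac12$), but the proof has a genuine gap exactly where you flag it: the gluing of the per-line signs is the entire content of the multivariate statement, and your sketch of it does not work. The mechanism you invoke — that the zero set $Z$ of the real-analytic function $g$ ``cannot disconnect $\mathbb{R}^d$ in a way that would allow an independent sign flip'' — is false as stated: $Z$ can perfectly well disconnect $\mathbb{R}^d$ (take $g(x)=g_1(x_1)g_2(x_2)\cdots g_d(x_d)$ with $g_1$ vanishing at a lattice abscissa; then $Z$ contains a full hyperplane through lattice points, every lattice line inside that hyperplane carries no sign information at all, and distinct components of $\{g\neq 0\}$ are, by definition, not joined by any path avoiding $Z$). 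A path between two components must cross $Z$, and nothing local forces the sign of a competitor $h$ to match across that crossing; what actually rules out a sign flip from one region to another is that $h$ is \emph{globally} band-limited, a constraint your line-by-line argument never uses as a whole. So the ``main obstacle'' you defer is not a technicality to be patched by density/continuity; it is the theorem, and the sketch offers no viable route to it.

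For comparison, the paper proves the proposition with a short global algebraic argument that avoids all zero-set geometry: if $h$ has the same unsigned samples as $g$ but $h\neq\pm g$, set $u=g-h$ and $v=g+h$; at every lattice point one of $u,v$ vanishes (according to whether the signs of $g$ and $h$ agree there), so $w=u\cdot v$ vanishes on all of $X$. Since $\supp\hat w\subseteq[-1,1]^d$ and $X=D[2\bs]^{-1}\mathbb{Z}^d$ with $s_j\ge 1$ samples at the critical density for that doubled band, $w\equiv 0$; as $u$ and $v$ extend to entire functions, one of them is identically zero, contradicting $h\neq\pm g$. This is the direct multivariate analogue of the product trick underlying the $d=1$ result, and it is the step your per-line decomposition cannot reproduce without an additional, currently missing, global argument. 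I would recommend abandoning the slicing strategy in favor of this (or supplying a complete consistency argument that explicitly exploits the band-limitedness of $h$ in all variables simultaneously).
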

\begin{proof}
We argue by contradiction. Suppose that $g,h$ both satisfy the requirements of the statement and that
$|g(X)|=|h(X)|$, but $g\neq \pm h$.
Let 
$X_1:=\{x\in X:\ \text{sgn }g(x)=\text{sgn }h(x)\}$ and $X_2:=X\setminus X_1$. Put $u:=g-h$ and $v:=g+h$. By our assumption that $g\neq \pm h$, we know
that $u\neq 0$ and $v\neq 0$. Furthermore, we have that $u(X_1)=0$ and $v(X_2)=0$. The function $w:= u\cdot v$ hence satisfies that $w(X)=0$. 
Furthermore, the function $w$ is bandlimited with $\supp \hat{w} \subseteq [-1,1]^d$ which, by the choice of the sampling set $X$ implies that $w=0$.
Since $u$ and $v$ are both holomorphic functions, this implies that either $u=0$ or $v=0$ which is a contradiction.
\end{proof}

For $d=1$, it was shown in \cite{thakur} and \cite[p. 18]{thakur2011three} that the oversampling factor of $2$ times the Nyquist frequency is sharp, and for any $s<1$ there exist bandlimited functions $h_j$, $j \in \{1,2\}$, with $\mathrm{supp}(\hat{h_j}) \subseteq \left[-\frac{1}{2},\frac{1}{2} \right]$ such that $|h_1((2s)^{-1}\mathbb{Z})| = |h_2((2s)^{-1}\mathbb{Z})|$ but $h_1 \neq \pm h_2$. It immediately follows that the oversampling factor of $2^d$ ($2$ in each dimension) in Proposition \ref{thakur1-multid-simple} is also sharp. Let $f$ be any function in $L^2(\mathbb{R}^{d-1})$ with bandwidth $\left[-\frac{1}{2},\frac{1}{2} \right]^{d-1}$. If $s_i<1$ for some $i$, then the functions $g_j(x) = f(x_1, \dots, x_{i-1}, x_{i+1}, \dots, x_d) h_j(x_i)$, $j \in \{1,2\}$, have bandwidth $\left[-\frac{1}{2},\frac{1}{2} \right]^d$ and have the same absolute values on $D[2 \bs]^{-1} \mathbb{Z}^d$ but are not equal up to a sign.\\

The support of $\hat{g}$ need not be aligned with the coordinate axes of $\mathbb{R}^d$. We introduce the following:
\begin{definition}\label{approp-sampling}
Let $F$ be a compact set in $\mathbb{R}^d,$ and $M$ a non-singular matrix on $\mathbb{R}^d,$ such that $F \subseteq M \left[-\frac{1}{2}, \frac{1}{2}\right]^d$. Then a \textit{sign-blind sampling set for $F$} is any lattice of the form
$X = (M^{\top})^{-1}  D[2 \bs]^{-1} \mathbb{Z}^d$, where $D[\bs] = \text{diag}(s_1,\dots,s_d),$ and $s_1, \ldots, s_d \in \mathbb{R}^d_+$ with $s_1, \ldots, s_d \geq 1$.
\end{definition}

\begin{remark}
Note that if a lattice $X$ is a sign-blind sampling set for $F$, that it
is also automatically a set of stable sampling for $F$, i.e. 
that every function
$g \in L^2(\mathbb{R}^d)$ with $\supp \hat{g} \subseteq F$ is completely
and stably determined by the sequence of samples $g(X)$. We also note that Definition \ref{approp-sampling} can be easily generalized to include lattices $X$ shifted by an arbitrary vector $v \in \mathbb{R}^d$, i.e., $$X = (M^{\top})^{-1} D[2\bs ]^{-1}\mathbb{Z}^d + v$$ (see Figure \ref{fig:samplingsets}), but we do not consider this case to simplify the notation.
\end{remark}

\begin{figure}[ht!]
     \begin{center}
      \subfigure{
            \label{fig:suppF}
            \includegraphics[width=0.47\textwidth]{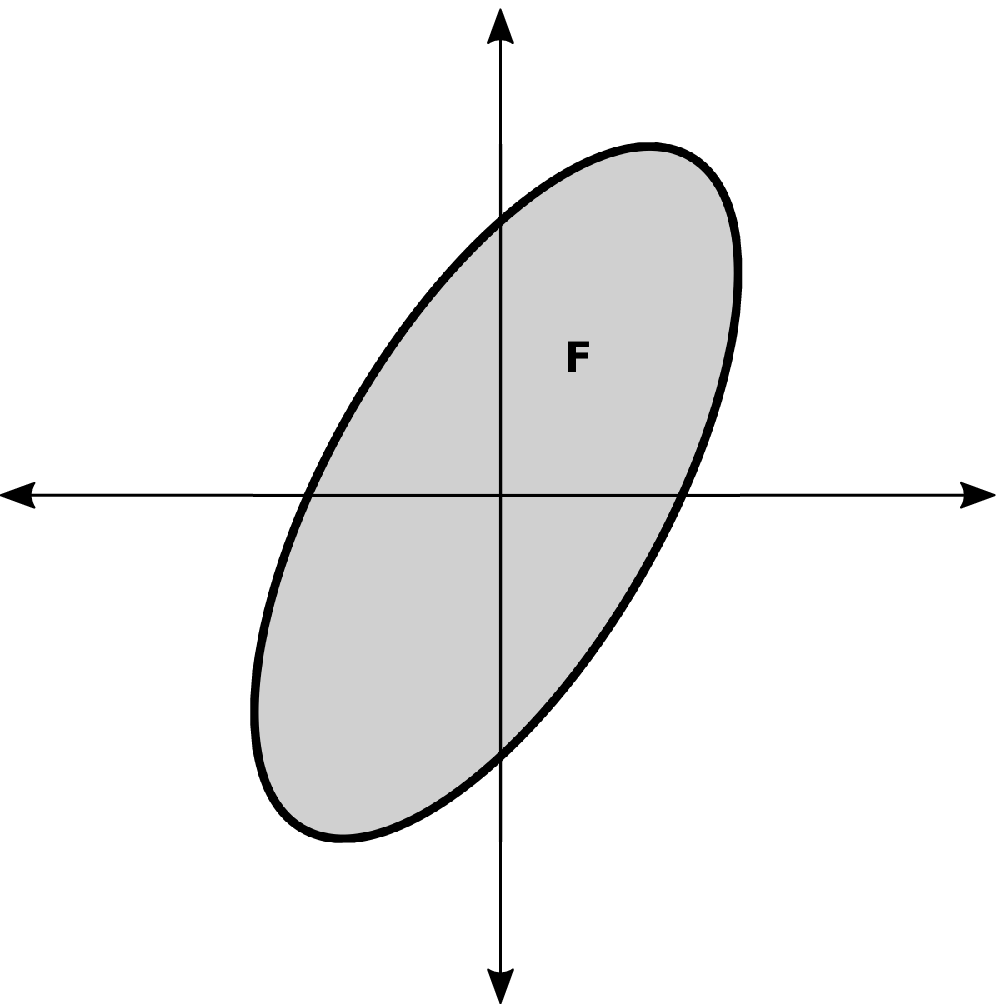}
        }\\
        \subfigure{
            \label{fig:samplingset}
            \includegraphics[width=0.47\textwidth]{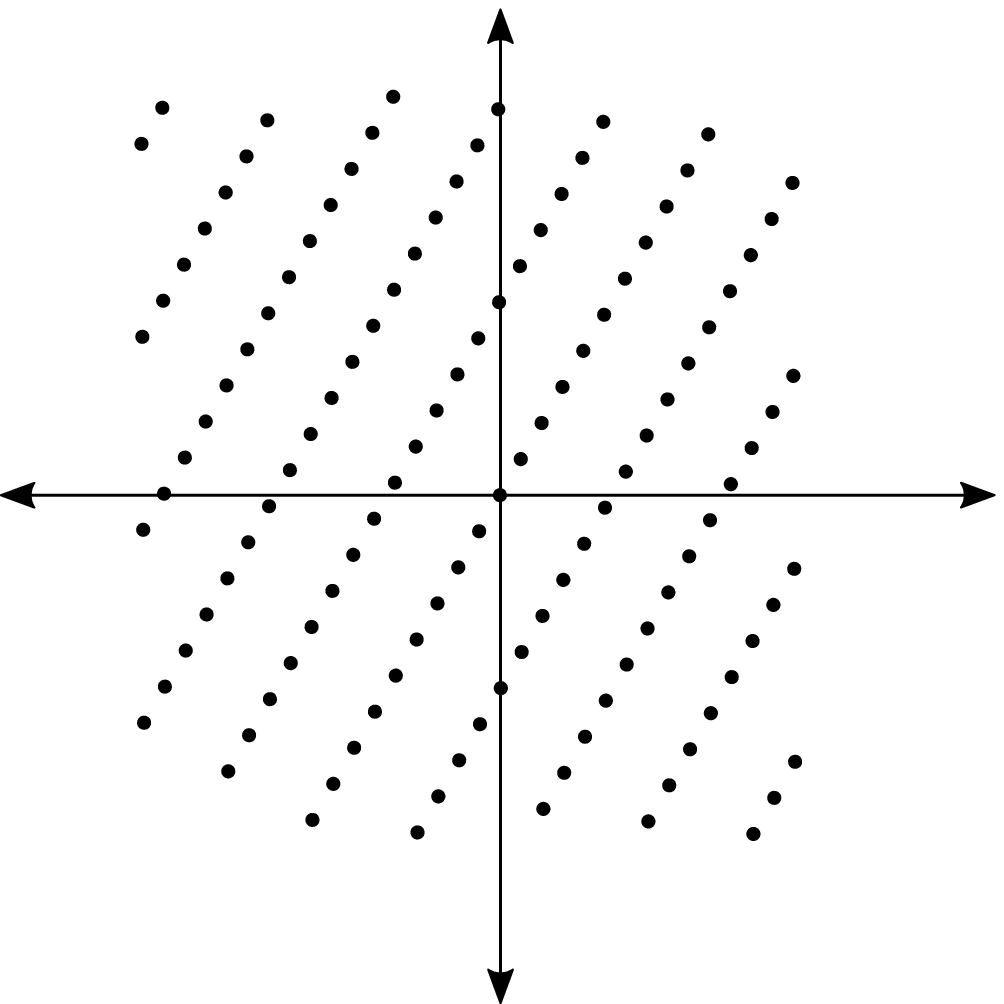}
        }
         \subfigure{
            \label{fig:samplingset-v}
            \includegraphics[width=0.47\textwidth]{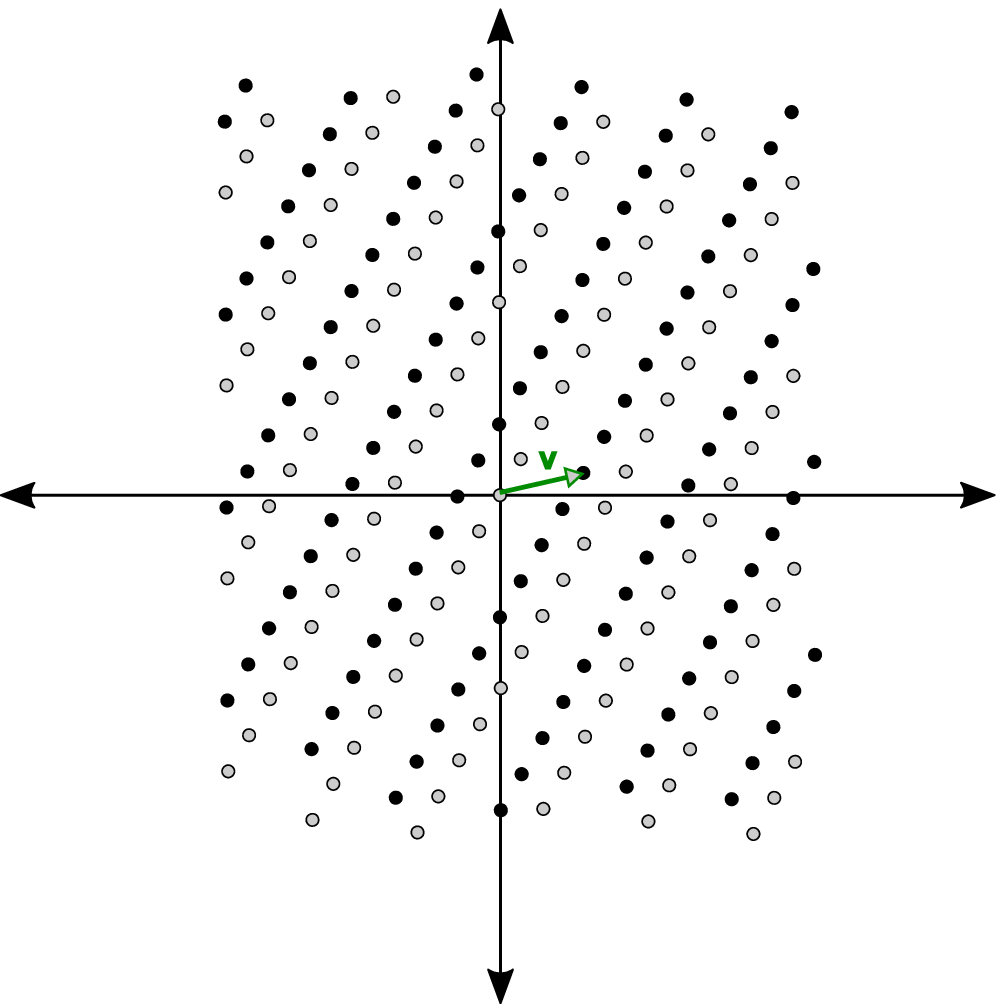}
        }
      \end{center}
    \caption{\textit{Top:} A compact set $F$. \textit{Bottom left:} A sign-blind sampling set $X$ for $F$ centered at the origin. \textit{Bottom right:} The set $X$ (shaded dots) and its arbitrarily shifted version $X+v$ (solidly black dots). 
     }

   \label{fig:samplingsets}
\end{figure}

A slightly more general version of Proposition \ref{thakur1-multid-simple} can now be formulated as follows.
\begin{prop}\label{thakur1-multid}
Let $g \in L^2(\mathbb{R}^d)$ be real-valued and band-limited and let $X$ be a sign-blind sampling set for $\supp \hat{g}$. Then, up to a global sign, $g$ can be uniquely recovered from the samples $|g|(X)$.
\end{prop}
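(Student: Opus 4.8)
The plan is to reduce Proposition \ref{thakur1-multid} to the axis-aligned case already settled in Proposition \ref{thakur1-multid-simple} by a single linear change of variables that straightens out the (possibly tilted) frequency support box and simultaneously normalizes the sampling lattice. Write $F = \supp \hat g$, so that by hypothesis $F \subseteq M[-\tfrac12,\tfrac12]^d$ and $X = (M^\top)^{-1}D[2\bs]^{-1}\mathbb{Z}^d$ with $s_1,\dots,s_d \geq 1$. Set $h(x) := g\big((M^\top)^{-1}x\big)$. This is again a real-valued function in $L^2(\mathbb{R}^d)$, and using the dilation rule $\widehat{g(A\,\cdot)}(\xi) = |\det A|^{-1}\hat g(A^{-\top}\xi)$ with $A = (M^\top)^{-1}$ (so that $A^{-\top} = M$) one computes $\hat h(\xi) = |\det M|\,\hat g(M\xi)$, whence $\supp \hat h \subseteq [-\tfrac12,\tfrac12]^d$.

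First I would record the effect of the change of variables on the samples: since $h(x) = g\big((M^\top)^{-1}x\big)$, we have $h\big(D[2\bs]^{-1}\mathbb{Z}^d\big) = g\big((M^\top)^{-1}D[2\bs]^{-1}\mathbb{Z}^d\big) = g(X)$, and therefore $|h|\big(D[2\bs]^{-1}\mathbb{Z}^d\big) = |g|(X)$. Next I would apply Proposition \ref{thakur1-multid-simple} to $h$ with the lattice $D[2\bs]^{-1}\mathbb{Z}^d$ (the hypotheses $s_i \geq 1$ and $\supp \hat h \subseteq [-\tfrac12,\tfrac12]^d$ being exactly what is needed), concluding that $h$ is determined up to a global sign by $|h|\big(D[2\bs]^{-1}\mathbb{Z}^d\big) = |g|(X)$. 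Since $g(y) = h(M^\top y)$, the function $g$ is then likewise determined up to a global sign, which is the claim. Concretely, if $g_1,g_2$ both meet the hypotheses and $|g_1|(X) = |g_2|(X)$, the associated $h_1,h_2$ satisfy $|h_1|\big(D[2\bs]^{-1}\mathbb{Z}^d\big) = |h_2|\big(D[2\bs]^{-1}\mathbb{Z}^d\big)$, hence $h_1 = \pm h_2$ by Proposition \ref{thakur1-multid-simple}, and so $g_1 = \pm g_2$.

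There is no real obstacle here beyond bookkeeping: the only thing to watch is that the transpose--inverse conventions built into Definition \ref{approp-sampling} are precisely the ones that make the frequency support and the sampling lattice transform compatibly under $x \mapsto (M^\top)^{-1}x$, so that after the substitution one lands exactly in the hypotheses of Proposition \ref{thakur1-multid-simple} rather than off by a transpose or by a dilation factor. If one wished to allow the shifted lattices $X + v$ from the Remark, the same argument applies verbatim, now invoking the shifted version of Proposition \ref{thakur1-multid-simple}, since translating $g$ does not affect $\supp \hat g$ and only translates the sampling set.
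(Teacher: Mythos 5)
Your proof is correct and follows essentially the same route as the paper, which likewise disposes of the general case by a linear change of variables reducing to Proposition \ref{thakur1-multid-simple}; your version just spells out the transpose--inverse bookkeeping that the paper leaves implicit in the phrase ``change of variables $x \to Mx$''. No gaps.
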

\begin{proof}
The proof follows immediately from Proposition \ref{thakur1-multid-simple}, after a simple change of variables $x \to Mx$.
\end{proof}

Note that when $\supp \hat{g}$ does not align well with the coordinate axes, 
the critical density can depend on the lattice directions determined by the matrix $M$.
More precisely, given a compact set $F$, there can exist
several matrices $M_{\ell}$, with {\em different determinants}, 
such that $F \subseteq M_{\ell} \left[-\frac{1}{2}, \frac{1}{2}\right]^d$ but 
$F \nsubseteq M_{\ell} \left[-\frac{t}{2}, \frac{t}{2}\right]^d$ for $t<1$. 
The corresponding
lattices $X_{\ell}=(M_\ell^{\top})^{-1} D[2\bs ]^{-1} \mathbb{Z}^d$ then have 
densities $2^d \det(M_\ell) \Pi_{j=1}^d s_j$; the requirement that all $s_j \geq 1$ corresponds to 
the different ``critical'' density values $2^d \det(M_\ell)$, for
the same set $F$.

\section{Unsigned samples from a frame}\label{sec:main}

Since $f$ is real-valued and the $\psl$ are real-valued and band-limited in the formulation of Problem \ref{prob}, it follows from Proposition \ref{thakur1-multid} that each $f_\lambda = f \ast \psl$ can be recovered up to an unknown sign factor $\sigma_\lambda$, if $|f_\lambda|$ is sampled at a rate that in each direction is sufficiently high to meet the assumptions of the proposition. In order to allow for recovery of $f$ up to a global sign from the individual $\sigma_\lambda f_\lambda$, an additional criterion on the frame family is needed:
\begin{definition}\label{overlap-supp} 
Let $\Psi_\Lambda$ constitute a semi-discrete frame of real-valued band-limited functions in $L^2(\mathbb{R}^d)$. For each $\lambda \in \Lambda$, let $F_\lambda = \supp \hat{\psi}_\lambda$. Then, we say that the frame has \textit{good Fourier-support overlap} (or \textit{good F-support overlap}) if for all $\lambda \in \Lambda$ there is a non-empty set $U_\lambda \subset F_\lambda$, with $U_\lambda$ open in $\mathbb{R}^d$, such that the $U_\lambda$ cover the whole space, i.e.,
\begin{equation}\label{open-cover}
\bigcup_{\lambda \in \Lambda} U_\lambda = \mathbb{R}^d.
\end{equation}
\end{definition}
\begin{remark}
It is a consequence of this definition that if we define $\widetilde{U}_{\lambda,\mu}$ by 
$$\widetilde{U}_{\lambda,\mu} = U_\lambda \cap U_\mu,$$ 
then, whenever $\widetilde{U}_{\lambda,\mu} \neq \emptyset$, there exists a subregion $\widetilde{F}_{\lambda, \mu} \subseteq \widetilde{U}_{\lambda,\mu}$ on which $|\hat{\psi}_{\lambda}| \geq c_\lambda>0$, $|\hat{\psi}_{\mu}| \geq c_\mu >0$.
\end{remark}

We are now ready to formulate our main result, stating that for a semi-discrete frame with good F-support overlap and corresponding sign-blind sampling sets $\{ X_\lambda\}_{\lambda \in \Lambda} = \{ \{ x_{\lambda, \bn}\}_{\bn \in \mathbb{Z}^d}\}_{\lambda \in \Lambda}$ a function $f$ with
exponential decay is uniquely determined, up to a global sign factor, by the samples $\{ |f \ast \psl| (X_\lambda)\}_{\lambda \in \Lambda}$:
\begin{theorem}\label{mainprop}
Let  $\Psi_\Lambda$ be a semi-discrete frame of real-valued band-limited functions that have good F-support overlap and let $X_\lambda = \{x_{\lambda, \bn}\}_{\bn \in \mathbb{Z}^d} =  (M_\lambda^T)^{-1} D[ 2 \bs_\lambda ]^{-1} \mathbb{Z}^d $ be a sign-blind sampling set for each $F_\lambda=\supp \widehat{\psi}_{\lambda}$. Furthermore, let $f \in L^2(\mathbb{R}^d)$ be real-valued and satisfy
$\int_{\mathbb{R}^d} |f(x)| e^{\alpha|x|}dx < \infty$ for some $\alpha>0$. Then, $f$ can be uniquely recovered up to one global sign factor. In fact, if any $h \in L^2(\mathbb{R}^d,\mathbb{R})$ satisfies $|f \ast \psl|(X_\lambda) = |h \ast \psl|(X_\lambda)$ for all $\lambda \in \Lambda$, then $h = \sigma f$ with an unknown sign factor $\sigma \in \{-1,1\}$.
\end{theorem}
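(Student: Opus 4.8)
The plan is to combine Proposition \ref{thakur1-multid} (applied scale by scale) with an analytic continuation argument that forces the local sign ambiguities to be globally consistent. First I would invoke Proposition \ref{thakur1-multid}: since $f$ and $h$ are real-valued, each $\psl$ is real-valued and band-limited, and $X_\lambda$ is a sign-blind sampling set for $F_\lambda = \supp\widehat\psi_\lambda$, the equality $|f\ast\psl|(X_\lambda)=|h\ast\psl|(X_\lambda)$ yields, for every $\lambda\in\Lambda$, a sign $\sigma_\lambda\in\{-1,1\}$ with $h\ast\psl = \sigma_\lambda\, (f\ast\psl)$ on all of $\mathbb{R}^d$. (Here one uses that $f\ast\psl$ and $h\ast\psl$ are band-limited with spectrum in $F_\lambda$, so the sign-blind sampling set recovers them up to a single global sign, not just pointwise.) The goal is then to show all the $\sigma_\lambda$ coincide, so that $h = \sigma f$ via the dual-frame reconstruction formula $f = \sum_\lambda (f\ast\psl)\ast\overline{\widetilde\psi_\lambda}$.

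Next I would pass to the Fourier side. The relation $h\ast\psl=\sigma_\lambda(f\ast\psl)$ becomes $\widehat h(\xi)\widehat\psi_\lambda(\xi) = \sigma_\lambda\,\widehat f(\xi)\widehat\psi_\lambda(\xi)$ for a.e.\ $\xi$, hence $\widehat h(\xi) = \sigma_\lambda\widehat f(\xi)$ on the set where $\widehat\psi_\lambda(\xi)\neq 0$, and in particular on the subregion $\widetilde F_{\lambda}$ of $U_\lambda$ where $|\widehat\psi_\lambda|\geq c_\lambda>0$. Now exponential decay of $f$ enters: $\int |f(x)|e^{\alpha|x|}dx<\infty$ implies $\widehat f$ extends to a function analytic on a complex strip and real-analytic on $\mathbb{R}^d$; the same would have to be argued for $h$ — but in fact one can sidestep this by noting $\widehat h = \sigma_\lambda \widehat f$ on an open set forces $\widehat h$ to agree with an analytic function there, and more directly, the difference $\widehat h - \sigma_\lambda\widehat f$ and the difference $\widehat h - \sigma_\mu\widehat f$ are both zero on open sets. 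The cleanest route: since $\widehat f$ is real-analytic on $\mathbb{R}^d$ and $\not\equiv 0$, its zero set has empty interior; on each nonempty $\widetilde U_{\lambda,\mu}=U_\lambda\cap U_\mu$ we have $\sigma_\lambda\widehat f = \widehat h = \sigma_\mu\widehat f$ on an open subset $\widetilde F_{\lambda,\mu}$ where $\widehat f$ does not vanish identically, so $\sigma_\lambda=\sigma_\mu$ whenever $U_\lambda\cap U_\mu\neq\emptyset$.

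Then I would run a connectedness/covering argument. The $\{U_\lambda\}_{\lambda\in\Lambda}$ form an open cover of $\mathbb{R}^d$ by nonempty sets; define a graph on $\Lambda$ with an edge between $\lambda$ and $\mu$ iff $U_\lambda\cap U_\mu\neq\emptyset$. Because $\mathbb{R}^d$ is connected and is covered by these open sets, this graph is connected (a standard fact: if it split into two classes, the unions of the corresponding $U_\lambda$ would disconnect $\mathbb{R}^d$). Since $\sigma_\lambda=\sigma_\mu$ across every edge, all $\sigma_\lambda$ are equal to a common value $\sigma\in\{-1,1\}$. Therefore $\widehat h = \sigma\widehat f$ on $\bigcup_\lambda\{\widehat\psi_\lambda\neq 0\}$, which has full measure by the frame lower bound \eqref{pu} ($\sum_\lambda|\widehat\psi_\lambda(\xi)|^2\geq A>0$ everywhere), so $\widehat h = \sigma\widehat f$ a.e.\ and $h=\sigma f$.

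The main obstacle is the step establishing $\sigma_\lambda=\sigma_\mu$ on overlapping supports: one must be careful that the set $\widetilde F_{\lambda,\mu}$ on which both $\widehat\psi_\lambda$ and $\widehat\psi_\mu$ are bounded away from zero is genuinely nonempty and open whenever $U_\lambda\cap U_\mu\neq\emptyset$ — this is exactly what the ``good F-support overlap'' definition and the subsequent remark are engineered to give — and that $\widehat f$ (being real-analytic and not identically zero, as $f\neq 0$) cannot vanish on all of that open set, so the two expressions $\sigma_\lambda\widehat f$ and $\sigma_\mu\widehat f$ really are forced to match there. A secondary point requiring care is the promotion of "$|f\ast\psl|=|h\ast\psl|$ on the lattice $X_\lambda$" to "$h\ast\psl = \pm f\ast\psl$ on $\mathbb{R}^d$", which is precisely Proposition \ref{thakur1-multid} but needs the observation that $f\ast\psl$ is real-valued band-limited with spectrum contained in $F_\lambda$, for which $X_\lambda$ is by hypothesis a sign-blind sampling set.
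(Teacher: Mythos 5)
Your proposal is correct and follows essentially the same route as the paper: apply Proposition \ref{thakur1-multid} scale by scale to get $h\ast\psi_\lambda=\sigma_\lambda\, f\ast\psi_\lambda$, then use the good F-support overlap together with the real-analyticity of $\widehat f$ (from exponential decay) to force $\sigma_\lambda=\sigma_\mu$ on overlapping $U_\lambda\cap U_\mu$, and finally propagate a single sign globally. The only (harmless) difference is cosmetic: you globalize via connectivity of the intersection graph of the cover and conclude $h=\sigma f$ from the frame lower bound \eqref{pu}, whereas the paper runs an iterative compact-exhaustion/nested-cover argument and appeals to the dual-frame reconstruction — both are equivalent in substance.
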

\begin{proof}
We can apply Proposition \ref{thakur1-multid} for any $\lambda \in \Lambda$ to reconstruct $f_\lambda$ and hence, $\widehat{f}_\lambda = \widehat{f} \widehat{\psi}_{\lambda}$, up to a sign factor. We proceed in two steps, the first local, the second knitting these together to get a global argument.

If $\psi_{\lambda}, \psi_{\mu}$ are any two frame elements for which $\widetilde{U}_{\lambda,\mu} \neq \emptyset,$ consider the set  $\widetilde{F}_{\lambda, \mu}$ defined as in the remark below Definition \ref{overlap-supp}. By the analyticity of $\widehat{f}$ on the $d$-dimensional ``strip'' 
$\{z \in \mathbb{C}^d; |\mbox{Im} z_j| <\alpha/d,\,j=1,\ldots d\}$, 
it follows that we can choose a subregion $J_{\lambda,\mu} \subseteq \widetilde{F}_{\lambda, \mu}$, open in $\mathbb{R}^d$, on which $|\hat{f}|>0$.

Now denote by $g_{\lambda}$, $g_{\mu}$ the reconstructions obtained, i.e., 
\begin{align*}
\widehat{g}_{\lambda} &= \sigma_\lambda \widehat{f}_{\lambda} = \sigma_\lambda  \widehat{f} \widehat{\psi}_{\lambda},\\ 
\widehat{g}_{\mu} &= \sigma_\mu \widehat{f}_{\mu} = \sigma_\mu \widehat{f} \widehat{\psi}_{\mu}.
\end{align*}
We can then use that $\widehat{g}_{\lambda}/\widehat{\psi}_{\lambda}$ and $\widehat{g}_{\mu}/\widehat{\psi}_{\mu}$ must coincide on $J_{\lambda,\mu}$ to match the two reconstructions and eliminate one of the two sign factors. 

To assemble these local arguments into a global reconstruction of $f$, we start with an initial compact subset $K_1 \subset \mathbb{R}^d$. By the compactness, there is a cover $\{ U_\lambda: \lambda \in \Lambda_{K_1}\}$ of $K_1$, with $\Lambda_{K_1} \subset \Lambda$ being finite. The signs of $\widehat{g}_\lambda$, $\lambda \in \Lambda_{K_1},$ can be matched by repeating the above step $|\Lambda_{K_1}|-1$ times.

We now proceed iteratively: Let $\{ U_\lambda: \lambda \in \Lambda_{K_i}\}$ be the chosen finite cover of the compact set $K_i$. We define the compact set $K_{i+1}$ as the closure of this cover, i.e., $K_{i+1} = \overline{\bigcup_{\lambda \in \Lambda_{K_1}} U_\lambda}$. By definition, $K_{i+1} \supsetneq K_i$. For the finite covering of $K_{i+1}$ we choose a collection of $U_\lambda$'s such that the covers are nested, i.e., $\{U_\lambda: \lambda \in K_{i+1}\} \supset \{ U_\lambda: \lambda \in K_{i}\}.$ 
The signs of $\widehat{g}_\lambda$, $\lambda \in \Lambda_{K_{i+1}} \backslash \Lambda_{K_i}$, are then matched with the sign of $\widehat{g}_\lambda$, $\lambda \in \Lambda_{K_i}$, repeating the same procedure as before.

For any $x \in \mathbb{R}^d$, this procedure will eventually involve a $U_\nu$ for which $x\in U_\nu$, 
by the connectedness of 
$\mathbb{R}^d$, since the $U_\lambda$ form a covering of $\mathbb{R}^d$ from open sets. 
It follows that this determines the whole family $\{f_\lambda\}_{\lambda \in \Lambda}$ up to one global sign factor, and hence $f$ 
itself up to one global sign factor.

If $h \in L^2(\mathbb{R}^d,\mathbb{R})$ satisfies  $|f \ast \psl|(X_\lambda) = |h \ast \psl|(X_\lambda)$ for all $\lambda \in \Lambda$, then, $h \ast \psi_\lambda = \sigma_\lambda f \ast \psi_\lambda$ for some $\sigma_\lambda \in \{-1,1\}$. Furthermore, we have $\widehat{f} \widehat{\psi}_\lambda = \sigma_\lambda \widehat{h} \widehat{\psi}_\lambda.$ Let $J_{\lambda, \mu}$ be defined as before so that $|\widehat{f}|>0$ on $J_{\lambda, \mu}$. Since in addition $|\widehat{\psi}_\lambda|>0$ on $\widetilde{F}_{\lambda,\mu}$, we also have $|\widehat{h}|>0$ on $J_{\lambda, \mu}$. Repeating the above argument then yields that $h=\sigma f$ for some $\sigma \in \{-1,1\}$.
\end{proof}
\begin{remark}
These conclusions also hold under more general conditions:
The requirement that $f \ast \psl$ be band-limited has been obtained by asking compact support for $\widehat{\psi}_\lambda$. Instead, one could also impose compactness of $\supp \widehat{f}$, so that weaker conditions on $\psl$ and adapted sampling rates would still give a similar solution. One would, however, need to impose additional conditions on $f$ to make up for the loss of analyticity of $\widehat{f}$, which played a role in the proof. The exponential decay of $f$ can also be slightly relaxed to include Fourier transforms of various quasi-analytic classes, e.g. $\int_{\mathbb{R}^d} |f(x)| e^{\alpha|x|/\log(1+|x|)}dx < \infty$, for which $J_{\lambda,\mu}$ as above can still be found.
\end{remark}

\section{Local stability}\label{sec:stability}
In this section, we study the stability of our reconstruction problem. Roughly speaking, we show that under the assumptions in Theorem \ref{mainprop}, the operator $T$ that maps a function $f$ to its samples $\{ |f \ast \psl |(X_\lambda)\}_{\lambda \in \Lambda}$ has closed range $\mathcal{R}(T)$ and is continuously invertible on $\mathcal{R}(T)$. 

Throughout this section we fix
\begin{itemize}
\item[--] a semi-discrete frame $\Psi_\Lambda$ of real-valued band-limited functions in $L^2(\mathbb{R}^d)$ with good F-support overlap (see Definition \ref{overlap-supp}) and with lower and upper frame bounds $A, B>0$, respectively; \\
\item[--] for each $\lambda \in \Lambda$, a sign-blind sampling set $X_\lambda$ for $\supp \widehat{\psi}_\lambda$ (see Definition \ref{approp-sampling}); \\
\item[--] a compact subset $C$ of $\mathbb{R}^d$.
\end{itemize}
In what follows, to ease on notation, we shall replace the sequence space $\ell^2(\Lambda,\ell^2(\mathbb{Z}^d))$ by its isomorphic cousin
$\ell^2(\Lambda \times \mathbb{Z}^d)$.
With these prerequisites we define the sampling operator as the mapping
\begin{align*}
T&: \, L^2(C,\mathbb{R})/\{-1,1\} \to \ell^2(\Lambda \times \mathbb{Z}^d), \\
T(f) &= \{ |F_\lambda|^{-\frac{1}{2}} |f \ast \psl |(X_\lambda)\}_{\lambda \in \Lambda},
\end{align*}
where $|F_\lambda|$ denotes the Lebesgue measure of $F_{\lambda}:=\supp \widehat{\psi}_\lambda$, $$\| f-g \|_{L^2(C,\mathbb{R})/\{-1,1\}}:= \min\{ \| f-g\|_{L^2(C,\mathbb{R})L^2(C,\mathbb{R})}, \| f+g\|_{L^2(C,\mathbb{R})} \}$$ and $$\| \{ \{ h_{\lambda,\bn}\}_{\bn \in \mathbb{Z}^d} \}_{\lambda \in \Lambda} \|_{\ell^2(\Lambda \times \mathbb{Z}^d)}:= \sum_{\lambda \in \Lambda, \bn \in \mathbb{Z}^d} |h_{\lambda,\bn}|^2.$$
\begin{remark}\leavevmode
\begin{enumerate}
\item The normalization by $|F_\lambda|^{-1/2}$ is introduced to ensure the convergence of the sums in $\ell^2(\Lambda, \ell^2(\mathbb{Z}^d))$.\\
\item For the proof of Theorem \ref{thm-stab} below, we need to consider a function class that is a closed set. Therefore we defined $T$ to act on functions in $L^2(C,\mathbb{R})$ for $C$ a fixed compact set.
\end{enumerate}
\end{remark}
We are now ready to state the main result of this section.
\begin{theorem}\label{thm-stab}
The operator $T$ is injective and its range $\mathcal{R}(T):=T(L^2(C,\mathbb{R})/\{-1,1\})$ is closed in $\ell^2(\Lambda \times \mathbb{Z}^d)$. The inverse map $$T^{-1}: \mathcal{R}(T) \subseteq \ell^2(\Lambda \times \mathbb{Z}^d) \to L^2(C,\mathbb{R})/\{-1,1\}$$ is continuous.
\end{theorem}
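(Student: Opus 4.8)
The plan is to establish the three assertions of Theorem~\ref{thm-stab} --- injectivity, closedness of the range, and continuity of the inverse --- essentially in that order, since injectivity is the easy part and the remaining two will follow from a single compactness-plus-analyticity argument. Injectivity of $T$ on $L^2(C,\mathbb{R})/\{-1,1\}$ is immediate from Theorem~\ref{mainprop}: if $T(f) = T(g)$ then $|f\ast\psi_\lambda|(X_\lambda) = |g\ast\psi_\lambda|(X_\lambda)$ for all $\lambda$, and since $f,g \in L^2(C,\mathbb{R})$ are compactly supported they certainly have exponential decay, so $g = \pm f$, i.e. $f$ and $g$ represent the same class in the quotient.

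For closedness of the range together with continuity of the inverse, I would argue by contradiction in the usual ``failure of a uniform lower bound'' style. Suppose the inverse is not continuous at some point, or the range is not closed; in either case one obtains a sequence $f_n \in L^2(C,\mathbb{R})$ with $\|f_n\|_{L^2(C,\mathbb{R})/\{-1,1\}} = 1$ (normalize) such that $T(f_n)$ is Cauchy in $\ell^2(\Lambda\times\mathbb{Z}^d)$ but $(f_n)$ does not converge in the quotient; one then needs to derive a contradiction. The first step is to extract a weakly convergent subsequence: since $\|f_n\|_{L^2(C)}$ is bounded and $L^2(C,\mathbb{R})$ is a Hilbert space, after passing to a subsequence $f_n \rightharpoonup f$ weakly in $L^2(C,\mathbb{R})$. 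Because each $\psi_\lambda$ is band-limited (hence $f \mapsto f\ast\psi_\lambda$ and pointwise evaluation at the lattice $X_\lambda$ are continuous linear functionals on $L^2(C)$ --- evaluation is bounded since $f\ast\psi_\lambda$ is band-limited and $f$ ranges over a fixed compact support set, so this is a bounded sampling functional), weak convergence passes through to give $(f_n\ast\psi_\lambda)(x_{\lambda,\bn}) \to (f\ast\psi_\lambda)(x_{\lambda,\bn})$ for every $\lambda,\bn$. Combined with the $\ell^2$-Cauchy (hence convergent) property of $T(f_n)$ and a diagonal/Fatou argument, one concludes $|f\ast\psi_\lambda|(X_\lambda)$ equals the limit, so $T(f_n) \to T(f)$ and $f$ is a genuine preimage --- this already gives closedness of the range once we also show $f_n \to f$ \emph{strongly} in the quotient.

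The main obstacle is precisely the upgrade from weak to strong convergence of $f_n$ and, simultaneously, ruling out the degenerate possibility $f = 0$ (which would violate the normalization $\|f_n\| = 1$ surviving in the limit, and is the real content). Here is where the hypotheses are used. First, if $f = 0$: then $|f_n\ast\psi_\lambda|(X_\lambda) \to 0$ in $\ell^2$, i.e. $|f\ast\psi_\lambda|(X_\lambda)$ data tends to zero, and one wants to conclude $\|f_n\|_{L^2} \to 0$, contradicting the normalization. This requires the covering condition \eqref{open-cover}: since the sign-blind sampling sets are sets of stable sampling for the respective $F_\lambda$ (noted in the remark after Definition~\ref{approp-sampling}), the squared $\ell^2$-norm of the samples $|f_n\ast\psi_\lambda|(X_\lambda)$ controls $\|f_n\ast\psi_\lambda\|_{L^2}^2$ from below (up to the normalizing factor $|F_\lambda|$, which is exactly why it was inserted), and summing over $\lambda$ and using the lower frame bound $A$ in \eqref{frame} gives $\|f_n\|_2^2 \lesssim \sum_\lambda \|f_n\ast\psi_\lambda\|_2^2 \lesssim \sum_\lambda |F_\lambda|\,\||f_n\ast\psi_\lambda|(X_\lambda)\|_{\ell^2(\mathbb{Z}^d)}^2 = \|T(f_n)\|^2 \to 0$, the desired contradiction. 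Second, for $f \neq 0$: one must show the signs stabilize. The delicate point is that $T$ records only unsigned data, so a priori the ``sign pattern'' of $f_n$ on each lattice could oscillate with $n$; but by the stable-sampling and frame estimates just invoked, $f_n \to f$ strongly in each $f_n\ast\psi_\lambda \to \pm_{n,\lambda} f\ast\psi_\lambda$ up to $n$-dependent signs, and the good F-support overlap forces a \emph{single} global sign $\sigma_n \in \{-1,1\}$ per $n$ for which $\|f_n - \sigma_n f\|_2 \to 0$ --- this is the quantitative analogue of the sign-matching argument in the proof of Theorem~\ref{mainprop}, and making it uniform in $n$ on the fixed compact set $C$ (so that only finitely many $U_\lambda$ are needed) is where the compactness of $C$ enters. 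Replacing $f_n$ by $\sigma_n f_n$, we get $f_n \to f$ strongly in $L^2(C,\mathbb{R})/\{-1,1\}$, contradicting the assumption that $(f_n)$ fails to converge in the quotient. This simultaneously yields that $\mathcal{R}(T)$ is closed and that $T^{-1}$ is continuous.
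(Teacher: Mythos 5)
Your opening moves are sound: injectivity from Theorem \ref{mainprop} (compact support gives exponential decay) is exactly how the paper gets injectivity, and your weak-compactness step is legitimate as far as it goes --- boundedness of $\{f_n\}$ follows from the stable-sampling plus lower-frame-bound estimate $\|f_n\|_2\lesssim\|T(f_n)\|$, point evaluation $(f\ast\psi_\lambda)(x)=\langle f,\psi_\lambda(x-\cdot)\rangle$ is a bounded functional, so a weak subsequential limit $f$ satisfies $T(f)=h$ componentwise and hence in $\ell^2$. In fact this already proves closedness of the range; strong convergence is not needed for that part (and note that your initial normalization $\|f_n\|=1$ is neither available, since $T$ is only positively homogeneous and the data must converge to a fixed limit, nor needed, by the boundedness estimate just quoted).

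The genuine gap is the upgrade to strong convergence, i.e.\ the treatment of the signs. You assert that ``by the stable-sampling and frame estimates just invoked, $f_n\ast\psi_\lambda\to\pm_{n,\lambda}\,f\ast\psi_\lambda$'', but those estimates bound a band-limited function by its \emph{signed} samples; they say nothing about recovering it, even up to sign, from \emph{unsigned} samples. What you need at this point is precisely a per-$\lambda$ stability statement for phase retrieval from unsigned samples, and no such quantitative statement is available --- Section \ref{sec:stability} of the paper stresses that this per-$\lambda$ problem is unstable and that $T^{-1}$ is not uniformly continuous, so a ``quantitative analogue of the sign-matching argument, uniform in $n$'' cannot simply be invoked; it essentially assumes the conclusion. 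The role you give to compactness of $C$ is also misplaced: $C$ sits in the space domain, the covering $\{U_\lambda\}$ in frequency, and $\widehat{f}$ cannot have compact frequency support unless $f=0$, so compactness of $C$ does not reduce the sign matching to finitely many $U_\lambda$; in the paper it only guarantees that the function class is closed. The paper's way around the sign problem is a compactness transfer: by the Fr\'echet criterion, relative compactness in $\ell^2$ depends only on absolute values, so Cauchyness of the unsigned data makes the \emph{signed} sample sequences relatively compact; a convergent subsequence of signed samples is then converted into functions $g_\lambda$, glued consistently via $\widehat{u}=\widehat{g_\lambda}/\widehat{\psi_\lambda}$, and identified using injectivity. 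Alternatively, your own framework can be completed without any sign matching: componentwise convergence of the signed samples together with $\|T(f_n)\|\to\|T(f)\|$ gives strong $\ell^2$ convergence of the signed samples (weak convergence plus norm convergence in a Hilbert space), after which the stable-sampling estimate and the lower frame bound yield $\|f_n-f\|_2\to 0$. As written, however, the continuity of $T^{-1}$ is not established.
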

\begin{proof}
We need to show that for any Cauchy sequence $\{h^m \}_{m \in \mathbb{N}} \subset \mathcal{R}(T)$ with $h^m \to h$ in $\ell^2(\Lambda\times\mathbb{Z}^d)$, $\{ T^{-1}(h^m) \}_{m \in \mathbb{N}}$ is a Cauchy sequence converging to $u$ and $T(u) = h$, i.e., $h \in \mathcal{R}(T)$. Our proof follows the lines of \cite[Section 3.2]{mw}.

Let $\{ h^m \}_{m \in \mathbb{N}} :=\{ \{ |F_\lambda|^{-1/2} |f^m \ast \psl|(X_\lambda)\}_{\lambda \in \Lambda} \}_{m \in \mathbb{N}}  \subset \mathcal{R}(T)$ be a Cauchy sequence in $\ell^2(\Lambda \times \mathbb{Z}^d)$. A theorem by Fr\'echet \cite{wloka1971} states that a subset $\{x^m \}_{m \in \mathbb{N}} \subset \ell^2(\mathbb{N})$ is relatively compact if and only if
\begin{itemize}
\item $\sup_{m \in \mathbb{N}} \|x^m \|_{\ell^2(\mathbb{N})} < \infty$ and \\
\item $\sup_{m \in \mathbb{N}} \sum_{k>N} |x_k^m|^2 \to 0$ as $N \to \infty$.
\end{itemize}
A straightforward corollary of this theorem is that if $\{|x^m| \}_{m \in \mathbb{N}} \subset \ell^2(\mathbb{N})$ is relatively compact, then $\{x^m \}_{m \in \mathbb{N}} \subset \ell^2(\mathbb{N})$ is also relatively compact.

Clearly, the sequence $\{h^m\}_{m \in \mathbb{N}}$ is relatively compact in $\ell^2(\Lambda \times \mathbb{Z}^d)$ because it is a Cauchy sequence. Identifying $\Lambda \times \mathbb{Z}^d$ with $\mathbb{N}$, we obtain that $\{ \{ |F_\lambda|^{-1/2} f^m \ast \psl(X_\lambda)\}_{\lambda \in \Lambda} \}_{m \in \mathbb{N}} \subset \ell^2(\Lambda \times \mathbb{Z}^d)$ is relatively compact. Therefore, it has a subsequence  $\{ \{ |F_\lambda|^{-1/2} f^{m_j} \ast \psl(X_\lambda)\}_{\lambda \in \Lambda} \}_{j \in \mathbb{N}}$ that converges in $\ell^2(\Lambda \times \mathbb{Z}^d)$.

Next, we need to show that the above implies that $$\{ \{ f^{m_j} \ast \psl \}_{\lambda \in \Lambda} \}_{j \in \mathbb{N}} \subset \ell^2(\Lambda, L^2(\mathbb{R}^d,\mathbb{R}))$$ is a Cauchy sequence. Since each $X_\lambda$ is a sign-blind sampling set for $\supp \widehat{\psi}_\lambda$, it is also a set of stable sampling for $\supp \widehat{\psi}_\lambda$, so that standard results in sampling theory imply that reconstructing a function $ g \ast \psl$ from its samples $|F_\lambda|^{-1/2} g \ast \psl (X_\lambda)$ is stable from $L^2(\mathbb{R}^d,\mathbb{R})$ to $\ell^2(\mathbb{Z}^d)$. More precisely, due to the normalization by $|F_\lambda|^{-1/2}$, there exists a constant $c>0$ such that for all $\lambda \in \Lambda$
\begin{equation}
\|g \ast \psl \|_{L^2(\mathbb{R}^d, \mathbb{R})} \leq c \||F_\lambda|^{-1/2} g \ast \psl (X_\lambda) \|_{\ell^2(\mathbb{Z}^d)}.
\end{equation}
This implies that for all $j,k \in \mathbb{N}$
\begin{align*}
\sum_{\lambda \in \Lambda} & \| f^{m_j} \ast \psl - f^{m_{j+k}} \ast \psl \|_{L^2(\mathbb{R}^d, \mathbb{R})}^2 \leq \\
& \leq c^2 \sum_{\lambda \in \Lambda} \| |F_\lambda|^{-1/2} f^{m_j} \ast \psl(X_\lambda)  - |F_\lambda|^{-1/2} f^{m_{j+k}} \ast \psl(X_\lambda) \|_{\ell^2(\mathbb{Z}^d)}^2.
\end{align*}
Since $\{ \{ |F_\lambda|^{-1/2} f^{m_j} \ast \psl(X_\lambda)\}_{\lambda \in \Lambda} \}_{j \in \mathbb{N}}$ is a Cauchy sequence, this results in $\{ \{ f^{m_j} \ast \psl \}_{\lambda \in \Lambda} \}_{j \in \mathbb{N}}$ being a Cauchy sequence in $\ell^2(\Lambda, L^2(\mathbb{R}^d,\mathbb{R}))$. We denote its limit by $\{ g_\lambda \}_{\lambda \in \Lambda}$.

As a next step we prove the existence of a function $u \in L^2(\mathbb{R}^d,\mathbb{R})$ with $g_\lambda = u \ast \psl$ for all $\lambda \in \Lambda$. To see this we define $\hat{u}\restriction_{F_\lambda} :=  \widehat{g_\lambda}/\widehat{\psl}$ and show that this definition is consistent, i.e., that
$$\frac{\widehat{g_\lambda}}{\widehat{\psl}} = \frac{\widehat{g_\mu}}{\widehat{\psi_\mu}} \text{ on } F_\lambda \cap F_\mu.$$
Indeed: by the definition of $g_\lambda$, this is equivalent to
$$\lim_{j \to \infty} \widehat{f^{m_j}} \widehat{\psi_\lambda} \widehat{\psi_\mu} = \lim_{j \to \infty} \widehat{f^{m_j}} \widehat{\psi_\mu} \widehat{\psi_\lambda},$$
which is obviously true. It follows that $\hat{u}$ is defined consistently on all of $\mathbb{R}$. We next prove that $f^{m_j} \to u$ in $L^2(\mathbb{R}^d, \mathbb{R})$ by the following argument:
\begin{align*}
\|f^{m_j} - u\|_{L^2(\mathbb{R}^d, \mathbb{R})}^2 \leq A^{-1} \sum_{\lambda \in \Lambda} \| f^{m_j} \ast \psl - u \ast \psl \|_{L^2(\mathbb{R}^d,\mathbb{R})}^2 \to 0 \text{ as } j \to \infty,
\end{align*}
where we have used that $\{u \ast \psl \}_{\lambda \in \Lambda}
=\{g_\lambda\}_{\lambda \in \Lambda} $ is the limit of the Cauchy sequence $\{ \{ f^{m_j} \ast \psl \}_{\lambda \in \Lambda} \}_{j \in \mathbb{N}}$. By the continuity of $T$ this immediately implies $T(u)=h$. Note that since $ f^{m_j} \in L^2(C,\mathbb{R})$, we also have that $u \in L^2(C,\mathbb{R})$. 
In fact, if $\tilde{u} \in L^2(C,\mathbb{R})$ is any accumulation point of the sequence $f^m$, repeating the above argument would lead to $T(\tilde{u})=h$. Since by Theorem \ref{mainprop} the operator $T$ is injective, this leads to $\tilde{u}=u$. Thus, the 
sequence $\{T^{-1}(h^m) \}_{m \in \mathbb{N}}$ has $u$ as its unique accumulation point -- in other words, it is a Cauchy sequence converging to $u$. 
\end{proof}

Because we have no quantitative estimates for the stability, this theorem 
is of at most theoretical interest. The local stability result in Theorem \ref{thm-stab} asserts that the nonlinear mapping $T^{-1}$ is continuous, but
this does not imply any uniform continuity for $T^{-1}$. In fact, the one-dimensional phase retrieval problem studied in \cite{mw} is 
explicitly shown there to {\em not} be uniformly continuous.

It turns out that the problem of reconstructing a real-valued band-limited function $f$ (in our case these are the $g_\lambda$) from its unsigned samples is {\em always} unstable. 
This is a special case of the more general recent result \cite{cahill-daub} that phase retrieval is always unstable in the infinite dimensional setting: when the signal space is an infinite dimensional Hilbert space $\mathcal{H}$, then no matter how one chooses the discrete frame $\{\varphi_\gamma \}_{\gamma\in \Gamma}$, one can always find, for any $\epsilon > 0$, elements $f_{\epsilon},g_{\epsilon} \in \mathcal{H}$ such that $\left\|f_{\epsilon}\right\|_\mathcal{H}, \left\|g_{\epsilon}\right\|_\mathcal{H} \leq 1$, $\sum_{\gamma \in \Gamma} \left( \left|\langle f_{\epsilon}, \varphi_\gamma\rangle\right| - \left|\langle g_{\epsilon}, \varphi_\gamma\rangle\right|\right)^2 < \epsilon$, yet $\left\|f_{\epsilon} -g_{\epsilon}\right\|_\mathcal{H}\geq c$, where $c>0$ is some fixed constant independent of $\epsilon$ (but depending on the frame). If the
frame allows for unique (but unstable) phase retrieval in 
$\mathcal{H}$, then for signals $f$ restricted to finite-dimensional subspaces of $\mathcal{H}$, stable reconstruction is possible (see
e.g. \cite{candes2015phase}), but
the stability estimate depends on a factor that scales with the dimension  of the subspace, diverging to infinity as this dimension grows 
unboundedly \cite{cahill-daub}. 

In the algorithm considered in \cite{thakur}, reconstruction for 
real-valued and band-limited $g \in L^2(\mathbb{R})$ is based on choosing a line $L = \{ z: \text{Im }z = c  \}$ for some constant $c$ on which to unwrap the phase. Since $g$ is analytic on $\mathbb{C}$, $g$ has no zeros on almost every such line $L$. The instability of the reconstruction then
manifests itself in the possibility for $g$ to have zeros arbitrarily close to $L$. In fact, one can construct band-limited functions such that for any $c \in \mathbb{R}$, and any (arbitrarily narrow) strip $S_L$ around $L$, $g$ has zeros in $S_L$, see \cite{thakur2011three}.

Note that increasing the sampling rate does not enhance the stability of the reconstruction. In view of the possibility that $g$ has zeros close to the line $L$, it seems intuitive that the instability cannot be overcome by a higher sampling rate. This observation is also consistent with the discussion on higher sampling rates in \cite{candes2015phase}.

\section{Examples}\label{sec:examples}

This section contains two examples illustrating the concepts discussed so far. We will need the notion of a (discrete) frame and a tight frame of $L^2(\mathbb{R}^d)$, which we now introduce. Let $\mm$ be a discrete index set and $\Omega=\{\omega_\mu\}_{\mu \in \mm} \subset L^2(\mathbb{R}^d)$. If there exist two positive constants $A \leq B < \infty$ such that for all $f \in L^2(\mathbb{R}^d)$
$$A\|f\|^2 \leq \sum_{\mu \in \mm} | \langle f, \omega_\mu \rangle|^2 \leq B \|f\|^2$$
holds, then $\Omega$ is a frame of $L^2(\mathbb{R}^d)$. If, in addition, $A=B$, then $\Omega$ is called a \textit{tight} frame. 

We borrow a phrasing from \cite{cahill-daub} and say that a frame $\Omega=\{\omega_\mu\}_{\mu \in \mm} \subset L^2(\mathbb{R}^d)$ of real-valued functions $\omega_\mu$ \textit{does sign retrieval} if any 
exponentially-decaying real-valued function $f \in L^2(\mathbb{R}^d)$ can be uniquely recovered (up to a global sign factor) from its unsigned frame coefficients $\{ |\langle f, \omega_\mu \rangle |\}_{\mu \in \mm}$. In what follows we will show that for the orthonormal basis of Meyer wavelets in 1D and the tight frame of second generation curvelets in 2D, suitably oversampled frames do sign retrieval. 

\subsection*{1D: Meyer wavelets} In $L^2(\mathbb{R})$, we consider an orthonormal basis $$\Phi=\{ \psi_{j,k}\}_{j \in \mathbb{N}, k \in \mathbb{Z}} \cup \{ \phi_{0,k}\}_{k \in \mathbb{Z}}$$ of Meyer wavelets $ \psi_{j,k}(\cdot) = 2^{j/2} \psi (2^j \cdot - k)$ and scaling functions $\phi_{0,k}(\cdot) = \phi(\cdot-k)$, all of which have compactly supported Fourier transforms, \cite{meyer1995wavelets}. In the frequency domain, the mother wavelet $\psi$ and the scaling function $\phi$ can be defined by
\begin{align*}
\widehat{\psi}(\xi) =
  \begin{cases} 
      \hfill \sin\big[\frac{\pi}{2} \beta(3|\xi|-1) \big] e^{i \pi \xi}   \hfill & |\xi| \in [\frac{1}{3},\frac{2}{3}] \\
      \hfill \cos\big[\frac{\pi}{2} \beta(3|\xi|/2-1) \big] e^{i \pi \xi} \hfill & |\xi| \in [\frac{2}{3},\frac{4}{3}] \\
      \hfill 0 \hfill & \text{otherwise}
  \end{cases}
\end{align*}
and 
\begin{align*}
\widehat{\phi}(\xi) =
  \begin{cases} 
      \hfill \cos\big[\frac{\pi}{2} \beta(3|\xi|-1) \big]    \hfill & |\xi| \leq 2/3 \\
      \hfill 0 \hfill & |\xi| > 2/3. \\
  \end{cases}
\end{align*}
Here, $\beta(x)$ can be any function going (smoothly) from $0$ to $1$ on $[0,1]$ that satisfies
$$\beta(x) + \beta(1-x)=1, \ \text{for all } x \in [0,1].$$
For $\alpha <1$, we further define the family of functions 
$$\Phi^\alpha = \{ \psi_{j,k}^\alpha(\cdot) := 2^{j/2} \psi(2^j \cdot - \alpha k)\}_{j \in \mathbb{N}, k \in \mathbb{Z}} \cup \{ \phi_{0,k}^\alpha(\cdot) = \phi(\cdot - \alpha k) \}_{k \in \mathbb{Z}}.$$
It is not hard to show that 
$\Phi^\alpha$ constitutes a frame for any $\alpha < 1$.

\begin{theorem}\label{phr-meyer}
If $\alpha \le 3/16$, then $\Phi^{\alpha}$ does sign retrieval.
\end{theorem}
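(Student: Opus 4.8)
The plan is to apply Theorem \ref{mainprop} to the oversampled Meyer frame $\Phi^\alpha$, which requires verifying two things: that $\Phi^\alpha$ (or rather its semi-discrete companion) is a semi-discrete frame of real-valued band-limited functions with good F-support overlap, and that the sampling induced by the shift parameter $\alpha k$ at scale $2^j$ is fine enough to be a sign-blind sampling set for each $F_\lambda = \supp\widehat{\psi}_{j,k}$. The first step is to recast the discrete frame $\Phi^\alpha$ in the semi-discrete language of Section \ref{sec:probform}: the relevant generators are $\psi_j(\cdot) = 2^{j/2}\psi(2^j\cdot)$ for $j\in\mathbb{N}$ together with the scaling function $\phi$, so $\Lambda = \mathbb{N}\cup\{\text{scaling}\}$, and $\langle f,\psi_{j,k}^\alpha\rangle = (f\ast \widetilde{\psi}_j)(\alpha 2^{-j}k)$ where $\widetilde{\psi}_j(x) = \overline{\psi_j(-x)}$ — and since $\psi$ is real, $\widetilde{\psi}_j$ is just a reflected rescaled copy of $\psi$, still real-valued and band-limited. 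I would note that the semi-discrete frame property \eqref{pu} is automatic here because $\sum_j|\widehat{\psi}(2^{-j}\xi)|^2 + |\widehat{\phi}(\xi)|^2 = 1$ by the Meyer partition-of-unity identity built into the definition of $\beta$.

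Next I would check good F-support overlap. From the definition, $\supp\widehat{\psi} = \{1/3\le|\xi|\le 4/3\}$, so $\supp\widehat{\psi_j} = \{2^j/3\le|\xi|\le 2^{j+1}\cdot 2/3\} = \{2^j/3 \le |\xi| \le 2^{j+2}/3\}$, and $\supp\widehat{\phi} = \{|\xi|\le 2/3\}$. Consecutive scales overlap: the band for scale $j$ covers $[2^j/3, 2^{j+2}/3]$ and the band for scale $j+1$ covers $[2^{j+1}/3, 2^{j+3}/3]$, whose intersection $[2^{j+1}/3, 2^{j+2}/3]$ has nonempty interior; the scaling function overlaps the $j=0$ wavelet likewise. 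Taking $U_j$ to be the open interior of each Fourier support (intersected with where $|\widehat{\psi_j}|>0$, which is the same interior here), the union of all $U_\lambda$ is all of $\mathbb{R}$. So Definition \ref{overlap-supp} holds.

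The third and genuinely quantitative step — and the one where the constant $3/16$ enters — is verifying that the shifts $\{\alpha 2^{-j}k\}_{k\in\mathbb{Z}}$ form a sign-blind sampling set for $F_j = \supp\widehat{\widetilde\psi_j}$ in the sense of Definition \ref{approp-sampling}. Here $F_j \subseteq [-2^{j+2}/3, 2^{j+2}/3] = 2^{j+2}/3\cdot[-1/2,1/2]\cdot 2 = (2^{j+3}/3)[-1/2,1/2]$; wait — more carefully, $F_j\subseteq M_j[-1/2,1/2]$ with $M_j = 2^{j+3}/3$ (a $1\times 1$ matrix). A sign-blind sampling set is then $(M_j)^{-1}(2s)^{-1}\mathbb{Z} = \frac{3}{2^{j+3}}\cdot\frac{1}{2s}\mathbb{Z} = \frac{3}{2^{j+4}s}\mathbb{Z}$ for any $s\ge 1$. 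Our actual sampling lattice is $\alpha 2^{-j}\mathbb{Z}$, so we need $\alpha 2^{-j} \le \frac{3}{2^{j+4}s}$ for some $s\ge 1$, i.e. $\alpha \le \frac{3}{16 s}$, and the worst case $s=1$ gives precisely $\alpha \le 3/16$. (The scaling-function band $[-2/3,2/3]$ needs $\alpha\le 3/8$, which is weaker.) The main obstacle — really the only subtlety — is pinning down the exact bandwidth of $\psi_j$, namely whether one uses $4/3$ as the half-bandwidth of $\psi$ (giving $2^{j+2}/3$ at scale $j$) and correctly doubling when passing to the sign-blind condition; getting this bookkeeping right is what produces $3/16$ rather than $3/8$ or $3/32$. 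Once these three facts are in place, Theorem \ref{mainprop} applies verbatim to the exponentially-decaying real-valued $f$, giving uniqueness up to global sign — i.e. $\Phi^\alpha$ does sign retrieval — and the proof is complete.
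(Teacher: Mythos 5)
Your proposal is correct and follows essentially the same route as the paper: pass to the semi-discrete frame generated by the dilated wavelets and the scaling function, note the good F-support overlap from the Meyer band structure, compute that the lattice $\alpha 2^{-j}\mathbb{Z}$ is a sign-blind sampling set for the band $[-2^{j+2}/3,2^{j+2}/3]$ exactly when $\alpha\le 3/16$ (with the weaker $\alpha\le 3/8$ for $\phi$), and invoke Theorem \ref{mainprop}. The only bookkeeping slip is the normalization of the semi-discrete generators: for the partition-of-unity bound \eqref{pu} one should take $\psi_j = 2^{j}\psi(2^{j}\cdot)$ (as the paper does), with the discrete coefficients then recovered up to the factor $2^{j/2}$, which changes nothing in the argument.
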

\begin{proof}
Let $\Psi$ be the semi-discrete frame formed by 
$$\Psi = \{\psi_j(\cdot) := 2^j \psi(2^j \cdot)\}_{j \in \mathbb{N}} \cup \{ \phi \}.$$
We denote the supports in frequency domain by
\begin{align*}
F_{-1}:= \supp \widehat{\phi} &= \Big[-\frac{2}{3}, \frac{2}{3} \Big]
\end{align*}
and 
\begin{align*}
F_j:=\supp \widehat{\psi}_{j} &= \Big[- \frac{2^{j+2}}{3}, - \frac{2^{j}}{3}\Big] \cup \Big[ \frac{2^{j}}{3},  \frac{2^{j+2}}{3}\Big], \ j \in \mathbb{N}. 
\end{align*}
Clearly, this semi-discrete frame has good F-support overlap. Furthermore, for all $j \in \mathbb{N}$ and for any $c_j \geq 2^{j+3}/3,$ the set $X_j= \frac{1}{2c_j} \mathbb{Z}$ is a sign-blind sampling set for $F_j$. By Theorem \ref{mainprop}, any exponentially decaying real-valued function $f \in L^2(\mathbb{R})$ can be uniquely recovered from its unsigned samples $\{|f \ast \psi_j|(X_j)\}_{j \in \mathbb{N}} \cup \{|f \ast \phi|(X_{-1})\}$ up to a global sign factor. Note, however, that
\begin{align*}
|f \ast \psi_j|(X_j) &= 2^{j/2}|\langle f, \psi_{j,k}^\alpha \rangle|_{j \in \mathbb{N}, k \in \mathbb{Z}},\\
|f \ast \phi|(X_{-1}) &= |\langle f, \phi_{0,k}^\alpha \rangle|_{k \in \mathbb{Z}},
\end{align*}
for $\alpha=2^{j-1}/c_j.$ Therefore, $\Phi^{\alpha}=\{ \psi_{j,k}^\alpha\}_{j \in \mathbb{N}, k \in \mathbb{Z}} \cup \{ \phi_{0,k}^\alpha\}_{k \in \mathbb{Z}}$ does sign retrieval for any $c_j\geq 2^{j+3}/3$, which is equivalent to $\alpha \le 3/16$.

\end{proof}
\subsection*{2D: Curvelets}
We follow the construction in \cite{candes2004new} of \textit{second generation} curvelets. For this, let $\nu$ be an even $C^\infty$-function with support on $[-1/2,1/2]$ satisfying
$$|\nu^2(\theta)|^2+|\nu(\theta-1/2)|^2=1, \ \theta \in [0,1)$$
and $w$ a $C^{\infty}$-function supported on $[1/3,8/3]$ which is obtained from a construction of Meyer wavelets (see \cite{candes2004new} for details).  For $j \geq 1$ and $\ell \in \Theta=\{ 0, 1, 2, \dots, 2^j-1\}$, we consider the angular window $\nu_{j,\ell}(\theta) = \nu(2^j \theta - \ell/2)$. Using this, one can define the windows $$\chi_{j,\ell}(\xi) = w(2^{-2j}|\xi|) (\nu_{j,\ell}(\theta)+\nu_{j,\ell}(\theta+1/2));$$
together with some function $\chi_0(\xi)$ with support in a disk centered at the origin (and hence capturing the low-frequency components), these yield a tiling of the frequency domain.

Let $\gamma_j$ be the inverse Fourier transform of $2^{-3j/2-5/2} \chi_{j,0}(\xi)$. In \cite{candes2004new} it is shown that the system of curvelets $\Gamma= \{ \gamma_{j,\ell,k} \}_{j \in \mathbb{N},\ell \in \Theta,k \in \mathbb{Z}^2}$ constructed as
$$\gamma_{j,0,k}(x)=\gamma_j(x_1-2^{-2j}k_1/\delta_1,x_2-2^{-j}k_2/\delta_2)$$
with $\delta_1=14/3(1+O(2^{-j}))$, $\delta_2=10\pi/9$ and $\gamma_{j,\ell,k}(x)=\gamma_{j,0,k}(R_{\theta_{j,\ell}}^*)$ constitutes a tight frame. Here, $R_{\theta_{j,\ell}}$ denotes the rotation by $\theta_{j,\ell}=2^{-j}\pi \ell$.

We now consider oversampled frames of the form $\Gamma^\alpha= \{ \gamma^\alpha_{j,\ell,k} \}_{j \in \mathbb{N},\ell \in \Theta,k \in \mathbb{Z}^2}$, where for $\alpha=(\alpha_1,\alpha_2)$,
$$\gamma^\alpha_{j,0,k}(x)=\gamma_j(x_1-2^{-2j}\alpha_1 k_1,x_2-2^{-j} \alpha_2 k_2)$$
and $\gamma^\alpha_{j,\ell,k}(x)=\gamma^\alpha_{j,0,k}(R_{\theta_{j,\ell}}^*x)$. 

Our goal is to identify values of $\alpha$ such that $\Gamma^\alpha$ does sign retrieval. 
For each $j\geq 1$, $F_{j,0}:=\supp \chi_{j,0}$ is contained in the open rectangle $(-c_{j,1}/2,c_{j,1}/2) \times (-c_{j,2}/2,c_{j,2}/2)$ for any constants $c_{j,1}\geq\frac{1}{3}2^{2j+4}$, $c_{j,2}\geq \frac{20\pi}{9}2^{j-1}$. 
Thus, $X_j= D[(3 \cdot 2^{-2j-5},\frac{9}{20\pi}2^{-j})] \mathbb{Z}^2$ yields a sign-blind sampling set for $F_{j,0}$. By setting $\alpha_1=2^{j-1}/c_{j,1}$, $\alpha_2 = 2^{j-1}/c_{j,2}$, one can -- similarly to the previous example with Meyer wavelets -- deduce the following result:

\begin{theorem}\label{phr-curvelets}
If $\alpha=(\alpha_1,\alpha_2)$ is chosen such that $\alpha_1\le 3 \cdot 2^{-5}$ and $\alpha_2 \le \frac{9}{20 \pi}$, then $\Gamma^\alpha$ does sign retrieval.
\end{theorem}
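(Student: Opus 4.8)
The plan is to mirror exactly the argument used for the Meyer wavelet case in Theorem~\ref{phr-meyer}, transferring the problem from the discrete curvelet frame $\Gamma^\alpha$ to an associated \emph{semi-discrete} frame and invoking Theorem~\ref{mainprop}. First I would introduce the semi-discrete frame
$$\Psi = \{ \chi_{j,\ell}^{\vee} \}_{j\in\mathbb{N},\ell\in\Theta} \cup \{\chi_0^{\vee}\},$$
where $\chi^\vee$ denotes the inverse Fourier transform, i.e.\ the real-valued band-limited functions whose Fourier transforms are the curvelet windows (up to normalization). One must check that these are indeed real-valued: the windows $\chi_{j,\ell}$ are real, even in the sense that $\chi_{j,\ell}(\xi)$ together with the antipodal term $\nu_{j,\ell}(\theta+1/2)$ make $\chi_{j,\ell}$ symmetric under $\xi\mapsto -\xi$, so their inverse Fourier transforms are real-valued; this is the small point that requires the $+1/2$ shift in the definition of $\chi_{j,\ell}$ and is exactly analogous to why one takes $|\xi|$ in the Meyer wavelet definition. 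By the tiling property of the curvelet construction in \cite{candes2004new}, $\sum |\chi_{j,\ell}(\xi)|^2 + |\chi_0(\xi)|^2$ is bounded above and below, so \eqref{pu} holds and $\Psi$ is a semi-discrete frame.

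Next I would verify the two hypotheses of Theorem~\ref{mainprop}. For \emph{good F-support overlap}: the supports $F_{j,\ell} = \supp\chi_{j,\ell}$ are the curvelet frequency wedges (together with the low-frequency disk $F_0 = \supp\chi_0$), and since they tile $\mathbb{R}^2$ with overlapping transition regions where $w$ and $\nu$ interpolate smoothly between $0$ and $1$, each $F_{j,\ell}$ contains a nonempty open subset $U_{j,\ell}$ on which $\chi_{j,\ell}>0$, and these $U_{j,\ell}$ cover $\mathbb{R}^2$ — this is immediate from the partition-of-unity-type property. For the \emph{sign-blind sampling sets}: the stated fact is that $F_{j,0} \subseteq (-c_{j,1}/2,c_{j,1}/2)\times(-c_{j,2}/2,c_{j,2}/2)$ for $c_{j,1}\ge\frac13 2^{2j+4}$, $c_{j,2}\ge\frac{20\pi}{9}2^{j-1}$; hence with $M_{j,0} = D[(c_{j,1},c_{j,2})]$ we have $F_{j,0}\subseteq M_{j,0}[-\tfrac12,\tfrac12]^2$, and $X_j = (M_{j,0}^T)^{-1}D[2\bs]^{-1}\mathbb{Z}^2$ with $\bs=(1,1)$ is a sign-blind sampling set for $F_{j,0}$ in the sense of Definition~\ref{approp-sampling}. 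The rotated wedges $F_{j,\ell} = R_{\theta_{j,\ell}}F_{j,0}$ then have sign-blind sampling sets $R_{\theta_{j,\ell}}X_j$, since rotating both $F$ and the lattice simultaneously preserves the containment $F\subseteq M[-\tfrac12,\tfrac12]^d$ (replace $M$ by $R M$, noting $(RM)^T)^{-1} = R(M^T)^{-1}$).

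Then Theorem~\ref{mainprop} applies directly: any exponentially-decaying real-valued $f\in L^2(\mathbb{R}^2)$ is uniquely determined, up to a global sign, by the unsigned samples $\{|f\ast\chi_{j,\ell}^\vee|(R_{\theta_{j,\ell}}X_j)\}$. The final step is the bookkeeping identifying these convolution samples with the curvelet coefficients of the oversampled frame: just as in the Meyer case where $|f\ast\psi_j|(X_j) = 2^{j/2}|\langle f,\psi_{j,k}^\alpha\rangle|$ with $\alpha = 2^{j-1}/c_j$, here one unwinds the definitions $\gamma_j = (2^{-3j/2-5/2}\chi_{j,0})^\vee$ and $\gamma^\alpha_{j,0,k}(x) = \gamma_j(x_1 - 2^{-2j}\alpha_1 k_1, x_2 - 2^{-j}\alpha_2 k_2)$ to see that a sample of $|f\ast\chi_{j,0}^\vee|$ at the lattice point $(2^{-2j}\alpha_1 k_1, 2^{-j}\alpha_2 k_2)$ equals (a fixed $j$-dependent constant times) $|\langle f,\gamma^\alpha_{j,0,k}\rangle|$, provided $2^{-2j}\alpha_1 = (2c_{j,1})^{-1}\cdot(\text{the axis scaling})$, i.e.\ $\alpha_1 = 2^{j-1}/c_{j,1}$ and $\alpha_2 = 2^{j-1}/c_{j,2}$; the rotation by $R_{\theta_{j,\ell}}$ matches the lattice $R_{\theta_{j,\ell}}X_j$ to the rotated curvelet $\gamma^\alpha_{j,\ell,k}(x) = \gamma^\alpha_{j,0,k}(R^*_{\theta_{j,\ell}}x)$. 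The requirement $s_j\ge 1$ in Definition~\ref{approp-sampling} — equivalently $c_{j,1}\ge\frac13 2^{2j+4}$ and $c_{j,2}\ge\frac{20\pi}{9}2^{j-1}$ — translates to $\alpha_1 = 2^{j-1}/c_{j,1}\le 3\cdot 2^{-5}$ and $\alpha_2 = 2^{j-1}/c_{j,2}\le\frac{9}{20\pi}$, giving the claimed bounds. I expect the only genuinely nontrivial point to be confirming real-valuedness of the $\chi_{j,\ell}^\vee$ (handled by the antipodal symmetrization built into the curvelet windows) and carefully tracking the anisotropic parabolic scaling $2^{-2j}$ versus $2^{-j}$ through the change of variables so that the two oversampling parameters $\alpha_1,\alpha_2$ come out with the stated, different, bounds; everything else is a routine transcription of the Meyer wavelet proof.
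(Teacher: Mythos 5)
Your proposal follows essentially the same route the paper takes (the paper itself only sketches this proof by analogy with the Meyer case): form the semi-discrete frame from the curvelet windows $\chi_{j,\ell}$ (real-valued by the antipodal symmetrization), verify good F-support overlap and sign-blind sampling for the rotated rectangles containing each $F_{j,\ell}$, apply Theorem \ref{mainprop}, and identify the convolution samples with the coefficients of the oversampled frame $\Gamma^\alpha$. The only blemish is the written formula $\alpha_1 = 2^{j-1}/c_{j,1}$, which should be $\alpha_1 = 2^{2j-1}/c_{j,1}$ so that $2^{-2j}\alpha_1 = (2c_{j,1})^{-1}$ and the bound $\alpha_1 \le 3\cdot 2^{-5}$ is scale-independent --- a slip the published text shares --- but your stated matching condition and final bounds are the correct ones.
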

 
Compared to $\Gamma$ the additional redundancy in $\Gamma^\alpha$ is of a factor $\alpha_1^{-1}/\delta_1\approx 2.29$ in the first component and of a factor $\alpha_2^{-1}/\delta_2 \approx 2$ in the second component, resulting overall in a redundancy factor $\frac{\alpha_1^{-1}}{\delta_1} \cdot \frac{\alpha_2^{-1}}{\delta_2} \le 4.58$.

It is easy to see that also for other frame constructions that are based on a dyadic frequency decomposition \cite{grohs2014alpha} (e.g. shearlets, ridgelets,...), one can oversample with a fixed scale-independent rate to obtain a frame that does sign retrieval. Finally, we remark that such results do not necessarily hold if the construction of the frame is not based on a dyadic decomposition of the frequency domain, as is the case for example for wave atoms \cite{demanet2007wave} or Gabor frames \cite{grochenig-book}. There, the oversampling factors will be scale dependent.

\bigskip
\textbf{Acknowledgments. }{R.A. is supported by an ETH Postdoctoral Fellowship. R.A., I.D. and P.G. would like to thank the Mathematisches Forschungsinstitut Oberwolfach (MFO). The authors also give their thanks to the anonymous referee for the detailed and useful comments and for inspiring our example given after Proposition \ref{thakur1-multid-simple}.}
\bibliographystyle{plain}
\bibliography{phase-retrieval}

\begin{thebibliography}{10}

\bibitem{akutowicz1956}
E.~J. Akutowicz.
\newblock On the determination of the phase of a {Fourier integral, I}.
\newblock {\em Transactions of the American Mathematical Society}, pages
  179--192, 1956.

\bibitem{alaifari-grohs}
R.~Alaifari and P.~Grohs.
\newblock Phase retrieval in the general setting of continuous frames for
  banach spaces.
\newblock {\em arXiv preprint}, 1604.03163, 2016.

\bibitem{alexeev2014phase}
B.~Alexeev, A.~S. Bandeira, M.~Fickus, and D.~G Mixon.
\newblock Phase retrieval with polarization.
\newblock {\em SIAM Journal on Imaging Sciences}, 7(1):35--66, 2014.

\bibitem{balan2006signal}
R.~Balan, P.~G. Casazza, and D.~Edidin.
\newblock On signal reconstruction without phase.
\newblock {\em Applied and Computational Harmonic Analysis}, 20(3):345--356,
  2006.

\bibitem{Bandeira_SavingPhase}
A.~S. Bandeira, J.~Cahill, D.~G. Mixon, and A.~A. Nelson.
\newblock Saving phase: {Injectivity} and stability for phase retrieval.
\newblock {\em Applied and Computational Harmonic Analysis (ACHA)},
  37:106--125, 2014.

\bibitem{bodmann2016algorithms}
B.~G. Bodmann and N.~Hammen.
\newblock Algorithms and error bounds for noisy phase retrieval with
  low-redundancy frames.
\newblock {\em Applied and Computational Harmonic Analysis}, 2016.

\bibitem{cahill-daub}
J.~Cahill, P.~G. Casazza, and I.~Daubechies.
\newblock Phase retrieval in infinite--dimensional {Hilbert} spaces.
\newblock {\em arXiv preprint arXiv:1601.06411}, 2016.

\bibitem{candes2004new}
E.~J. Cand{\`e}s and D.~L. Donoho.
\newblock New tight frames of curvelets and optimal representations of objects
  with piecewise {$C^2$} singularities.
\newblock {\em Communications on Pure and Applied Mathematics}, 57(2):219--266,
  2004.

\bibitem{candes2015phase}
E.~J. Cand{\`e}s, Y.~C. Eldar, T.~Strohmer, and V.~Voroninski.
\newblock Phase retrieval via matrix completion.
\newblock {\em SIAM Review}, 57(2):225--251, 2015.

\bibitem{chen2016phase}
Y.~Chen, C.~Cheng, Q.~Sun, and H.~Wang.
\newblock Phase retrieval of real-valued signals in a shift-invariant space.
\newblock {\em arXiv preprint arXiv:1603.01592}, 2016.

\bibitem{demanet2007wave}
L.~Demanet and L.~Ying.
\newblock Wave atoms and sparsity of oscillatory patterns.
\newblock {\em Applied and Computational Harmonic Analysis}, 23(3):368--387,
  2007.

\bibitem{grochenig-book}
K.~Gr{\"o}chenig.
\newblock {\em Foundations of time-frequency analysis}.
\newblock Springer Science \& Business Media, 2013.

\bibitem{grohs2014alpha}
P.~Grohs, S.~Keiper, G.~Kutyniok, and M.~Sch{\"a}fer.
\newblock $\alpha $-molecules.
\newblock {\em arXiv preprint arXiv:1407.4424}, 2014.

\bibitem{hurt2001phase}
N.~E. Hurt.
\newblock {\em {Phase Retrieval and Zero Crossings: Mathematical Methods in
  Image Reconstruction}}, volume~52.
\newblock Springer Science \& Business Media, 2001.

\bibitem{klibanov1995phase}
M.~V. Klibanov, P.~E. Sacks, and A.~V. Tikhonravov.
\newblock The phase retrieval problem.
\newblock {\em Inverse problems}, 11(1):1, 1995.

\bibitem{mw}
S.~Mallat and I.~Waldspurger.
\newblock Phase retrieval for the {Cauchy} wavelet transform.
\newblock {\em Journal of Fourier Analysis and Applications}, pages 1--59,
  2014.

\bibitem{meyer1995wavelets}
Y.~Meyer.
\newblock {\em Wavelets and operators}, volume~1.
\newblock Cambridge University Press, 1995.

\bibitem{thakur}
G.~S. Thakur.
\newblock Reconstruction of bandlimited functions from unsigned samples.
\newblock {\em Journal of Fourier Analysis and Applications}, 17(4):720--732,
  2011.

\bibitem{thakur2011three}
G.~S. Thakur.
\newblock {\em {Three Analytic Problems in Signal Sampling and Reconstruction
  Theory}}.
\newblock PhD thesis, Princeton University, 2011.

\bibitem{walther1963}
A.~Walther.
\newblock The question of phase retrieval in optics.
\newblock {\em Journal of Modern Optics}, 10(1):41--49, 1963.

\bibitem{wloka1971}
J.~Wloka.
\newblock {\em {Funktionalanalysis und Anwendungen}}.
\newblock De Gruyter, 1971.

\end{thebibliography}

\end{document}